\def\be{\begin{equation}}
\def\ee{\end{equation}}
\newtheorem{theorem}{Theorem}[section]                   %theory and  corollary
\newtheorem{lemma}{Lemma}[section]
\newtheorem{remark}{Remark}[section]
\newcommand{\N}{\mathbb N}
\newcommand{\Z}{\mathbb Z}
\newcommand{\R}{\mathbb R}
\newcommand{\C}{\mathbb C}
\begin{document}

\title{\LARGE \bf Minimal $P$-symmetric period problem of first-order autonomous   Hamiltonian Systems }
\author{ Chungen Liu and Ben-Xing Zhou\\\
{\small School of Mathematics and LPMC, Nankai University,}\\
{\small Tianjin 300071, P. R. China}\\
{\small }}
\date{}
 \maketitle
 \footnotetext[1]{ Partially supported by the NSF of China (11471170, 10621101), 973 Program of
MOST (2011CB808002) and SRFDP.
$\;\;\;\;$ Email: {\tt liucg@nankai.edu.cn}(for Liu) and {\tt 1120130022@mail.nankai.edu.cn}(for Zhou)}

\noindent {\bf Abstract}: Let $P\in Sp(2n)$ satisfying $P^{k}=I_{2n}$, we consider the minimal $P$-symmetric period problem of the autonomous nonlinear Hamiltonian system
   \begin{equation*}
\dot x(t) = JH^{\prime}(x(t)).
\end{equation*}
For some symplectic matrices $P$, we show that for any $\tau>0$ the above Hamiltonian system possesses a $k\tau$ periodic solution $x$ with $k\tau$ being its minimal $P$-symmetric  period  provided $H$ satisfies the   Rabinowitz's conditions on the minimal period conjecture, together with that $H$ is convex and $H(Px)=H(x)$.

\vspace{0.3cm} \noindent {\bf Key Words:} \ Maslov $P$-index, Relative Morse index, Minimal $P$-symmetric period, Hamiltonian system

\section{Introduction and main result}
\setcounter{equation}{0}

In this paper, we study the following first-order autonomous Hamiltonian system with $P$-boundary condition:
\begin{equation}\label{1.1}
\left\{ \begin{array}{ll}\dot{x}= JH^{\prime}(x), x\in \R^{2n}\\
x(\tau)=Px(0).\;\;\end{array}\right.
\end{equation}
where $\tau>0$, $P\in Sp(2n)$, and $H\in C^{2}(\R^{2n}, \R)$ is the Hamiltonian function satisfying $H(Px) = H(x)$, $\forall x \in \R^{2n}$. $H^{\prime}(x)$ denote its gradient,
$J=\left( \begin{array}{cc}
0 \ \ & -I_{n}\\
I_{n} & 0
\end{array} \right)$
is the standard symplectic matrix, $I_{n}$ is the identity matrix on $\R^{n}$. Without confusion, we shall omit the
subindex of the identity matrix.

A solution $(\tau, x)$ of the problem (\ref{1.1}) is called a $P$-solution of the Hamiltonian systems. The problem (\ref{1.1}) has relation with the closed geodesics on Riemannian manifold (cf.\cite{HS1}) and symmetric periodic solution or the quasi-periodic solution problem (cf.\cite{HS2}). In addition, the first author C. Liu in \cite{Liu2} transformed some periodic boundary problem for nonlinear delay differential systems and some nonlinear delay Hamiltonian systems to $P$-boundary problems of Hamiltonian systems as above, we also refer \cite{CAMR,FT,HW, HS3} and references therein for the background of $P$-boundary problems in $N$-body problems.

Let $P\in Sp(2n)$ and $k\in \N=\{0,1,2,\cdots \}$, we say $P$ satisfies $(P)_{k}$ condition, if $P^{k}=I_{2n}$ and for each integer $m$ with $1\leq m\leq k-1$, $P^{m}\neq I$. If $P$ satisfies $(P)_{k}$ condition, a $P$-solution $(\tau, x)$ can be extended as a $k\tau$-periodic solution $(k\tau, x^{k})$. We say that a $T$-periodic solution $(T, x)$ of the Hamiltonian system in (\ref{1.1}) is $P$-symmetric if $x(\frac{T}{k})=Px(0)$. $T$ is the $P$-symmetric period of $x$. $T$ is called the minimal $P$-symmetric period of $x$ if $T=\min\{\lambda>0\mid x(t+\frac{\lambda}{k})=Px(t), \forall t\in \R\}$.

We assume the following conditions on $H$ in our arguments:
\begin{enumerate}

\item[(H0)] $H\in C^{1}(\R^{2n}, \R)$  $\forall x \in \R^{2n}$;
\item[(H1)] $H\in C^{2}(\R^{2n}, \R)$ with $H(Px) = H(x)$, $\forall x \in \R^{2n}$;

\smallskip
\item[(H2)] $H(x)\geq 0$, $\forall x\in \R^{2n}$;
\item[(H3)] $H(x)= o(\vert x\vert^{2})$ as $\vert x\vert\to 0$;
\smallskip
\item[(H4)]  There are constants $\mu > 2$ and $R_{0} > 0$ such that
\begin{equation*}
0 < \mu H(x) \leq  (H^{\prime}(x), x), \ \ \forall \ \vert x \vert \geq R_{0};
\end{equation*}
\item[(H5)]  $H^{\prime\prime}(x)> 0$, $\forall x\in \R^{2n}$;
\end{enumerate}

In \cite{Rab1}, Rabinowitz proved that the Hamiltonian system in (\ref{1.1}) possesses a non-constant prescribed period solution provided $H$ satisfying (H0) and (H2)-(H4). Because a $\tau/k$-periodic function is also a $\tau$-periodic function, moreover, in \cite{Rab1} Rabinowitz proposed a conjecture: {\it under the conditions (H0) and (H2)-(H4), for any $\tau>0$, the Hamiltonian system in (\ref{1.1}) possesses a $\tau$-periodic with $\tau$ being its minimal period}. Since then, there were many papers on this minimal period problem (cf. \cite{CE}, \cite{AM}, \cite{AC}, \cite{EH}, \cite{Long4}, \cite{Long5}, \cite{Long6}, \cite{DDL}, etc.). In 1997, D. Dong and Y. Long \cite{DDL} developed a new method on this prescribed minimal period solution problem and discovered the intrinsic relationship between the minimal period and the indices of a solution. Based upon the work of \cite{DDL}, G. Fei, Q. Qiu, T. Wang and others  applied this method to various problems of Rabinowitz's conjecture (cf. \cite{FQ}, \cite{FKW}, \cite{Liu4}, etc.). In fact, under conditions $H(0)=0$ and $H(x)> 0$, $\forall x\in \R^{2n} \setminus \{0\}$; $\frac{H(x)}{\mid x \mid^{2}}\rightarrow +\infty$ as $x\rightarrow 0$ and $\frac{H(x)}{\mid x \mid^{2}}\rightarrow 0$ as $x\rightarrow +\infty$, F. Clarke and I. Ekeland proved a result on the corresponding minimal period problem for some given $T$ in \cite{CE}; I. Ekeland and H. Hofer gave a criterion for the conjecture in \cite{EH} which is unfortunately not easy to check.

For $P$-boundary problem, S. Tang \cite{LT2}  and the first author of this paper proved that for any $0<\tau<\frac{\pi}{\max_{t\in [0, \tau]}\Vert J\dot{\gamma}_{P}(t)\gamma_{P}(t)^{-1} \Vert}$, there exists a nonconstant $P$-solution with its minimal $P$-symmetric period  $k\tau$ or $\frac{k\tau}{k+1}$  via the iteration theory of Maslov $P$-index. In \cite{Liu3}, the first author of this paper  improved the result that for every $\tau>0$,
there exists a nonconstant $P$-solution with its minimal $P$-symmetric period  $k\tau$ or $\frac{k\tau}{k+1}$.

For $n\in \N$, $k > 0$, denote by
\begin{equation*}
Sp(2n)\equiv Sp(2n,\R)= \{M \in \mathcal L(\R^{2n}) \mid M^{T}JM=J \},
\end{equation*}
$$\mathcal P_{\tau}(2n)\equiv \{\gamma \in C([0,\tau], Sp(2n)) \mid \gamma(0)= I\},$$
\begin{equation*}
Sp(2n)_{k}\equiv \{P \in Sp(2n) \mid  {   P   \ satisfies\  (P)_{k}\  condition }\},
\end{equation*}
\begin{equation*}
\Omega(M)\equiv \{N \in Sp(2n) \mid  {\rm \sigma(N)\cap \mathbf{U}=\sigma(M)\cap \mathbf{U}\ and\ \nu_{\lambda}(N)=\nu_{\lambda}(M),\forall \ \lambda\in \sigma(M)\cap \mathbf{U} }\}.
\end{equation*}
Denote by $\Omega^{0}(M)$ the path connected component of $\Omega(M)$ which contains $M$.

\begin{lemma}\label{ll}
If $P \in Sp(2n)_{k}$, then there exists a matrix $I_{2p} \diamond R(\frac{2\pi}{k})^{\diamond j_{1}} \diamond \cdots
\diamond R(\frac{2r\pi}{k})^{\diamond j_{r}} \in \Omega^{0}(P^{-1})$, with $p+\sum_{m=1}^{r} j_{m}=n$.
\end{lemma}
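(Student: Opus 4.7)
\noindent The plan is to use $(P^{-1})^{k}=I_{2n}$ to obtain a symplectic spectral decomposition of $P^{-1}$, symplectically conjugate it into a direct sum of identity and rotation blocks with angles $2\pi m/k$, and then transfer the normal form from the conjugacy class to $\Omega^{0}(P^{-1})$ using path-connectedness of $Sp(2n)$.

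First I would observe that $(P^{-1})^{k}=I_{2n}$ forces the minimal polynomial of $P^{-1}$ to divide $x^{k}-1$; since that polynomial has distinct roots, $P^{-1}$ is semisimple over $\C$ and $\sigma(P^{-1})\subset\{\zeta_{m}:=e^{2\pi im/k}\}_{m=0}^{k-1}\subset\mathbf{U}$. In particular the (complex) eigenspaces coincide with the generalized eigenspaces, and complex conjugation identifies $E_{\zeta_{m}}$ with $E_{\zeta_{k-m}}$. Using the standard symplectic orthogonality $\omega(E_{\lambda},E_{\mu})=0$ unless $\lambda\mu=1$, together with $|\zeta_{m}|=1$ so that inverse-pairing is conjugate-pairing, the real invariant subspaces
\begin{equation*}
V_{0}:=E_{1}\cap\R^{2n},\qquad V_{m}:=(E_{\zeta_{m}}\oplus E_{\zeta_{k-m}})\cap\R^{2n},\ \ 1\le m\le\lfloor k/2\rfloor,
\end{equation*}
are mutually $\omega$-orthogonal, and non-degeneracy of $\omega$ on $\R^{2n}$ forces each $V_{m}$ to be a symplectic subspace, of even real dimension $2p$ and $2j_{m}$ respectively, with $2p+2\sum_{m}j_{m}=2n$.

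Next I would build a symplectic basis block by block. On $V_{0}$ any symplectic basis puts $P^{-1}|_{V_{0}}=I_{2p}$. For $1\le m<k/2$, I pick a $\C$-basis $w_{1},\dots,w_{j_{m}}$ of $E_{\zeta_{m}}$ and write $w_{s}=u_{s}+iv_{s}$ with $u_{s},v_{s}\in\R^{2n}$; the relation $P^{-1}w_{s}=\zeta_{m}w_{s}$ forces $P^{-1}$ to act on $\mathrm{span}_{\R}\{u_{s},v_{s}\}$ as $R(\pm2\pi m/k)$. Using that $\omega|_{V_{m}}$ is non-degenerate and pairs $E_{\zeta_{m}}$ with $E_{\zeta_{k-m}}$, I can dualise the $w_{s}$ and rescale the $u_{s},v_{s}$ to a symplectic basis of $V_{m}$, giving $P^{-1}|_{V_{m}}=R(\pm2\pi m/k)^{\diamond j_{m}}$. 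Since $R(-2\pi m/k)=R(2\pi(k-m)/k)$, any such block is of the permitted form $R(2\pi m'/k)$ with $m'\in\{1,\dots,k-1\}$. When $k$ is even, the $-1$ eigenspace contributes $(-I_{2})^{\diamond j_{k/2}}=R(\pi)^{\diamond j_{k/2}}$, which also fits with $m'=k/2$. Relabelling and allowing some $j_{m}$ to vanish, I obtain a symplectic $Q$ with
\begin{equation*}
Q^{-1}P^{-1}Q=M_{0}:=I_{2p}\diamond R(2\pi/k)^{\diamond j_{1}}\diamond\cdots\diamond R(2r\pi/k)^{\diamond j_{r}},\qquad p+\sum_{m=1}^{r}j_{m}=n.
\end{equation*}

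To finish, I would pick a path $Q(t)$ in $Sp(2n)$ from $I$ to $Q$ (possible because $Sp(2n)$ is path-connected) and set $N(t):=Q(t)^{-1}P^{-1}Q(t)$. Each $N(t)$ is conjugate to $P^{-1}$, so $\sigma(N(t))=\sigma(P^{-1})$ and $\nu_{\lambda}(N(t))=\nu_{\lambda}(P^{-1})$ for every $\lambda$; hence $N(t)\in\Omega(P^{-1})$ for all $t$, so $M_{0}=N(1)$ lies in the same connected component as $P^{-1}=N(0)$, i.e.\ $M_{0}\in\Omega^{0}(P^{-1})$. The main obstacle I expect is the linear-algebra step of producing the symplectic basis of each $V_{m}$ from pairs of real/imaginary parts of complex eigenvectors, and verifying that the resulting Krein-sign ambiguity between $R(2\pi m/k)$ and $R(-2\pi m/k)$ is exactly absorbed by the admissible list of angles $\{2\pi m'/k:1\le m'\le k-1\}$; beyond that, the argument is essentially the standard semisimple symplectic normal form.
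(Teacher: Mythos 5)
Your proof is correct, but it takes a genuinely different route from the paper's. The paper invokes Long's normal form theorem (Theorem 1.8.10 of his book) to produce a $\diamond$-product of basic normal forms lying in $\Omega^{0}(P^{-1})$, and then eliminates the non-semisimple possibilities case by case: the blocks $N_{1}(\pm1,b)$ with $b\neq0$ and $N_{2}(\omega,b)$ are ruled out by explicitly computing their $k$-th powers and deriving a contradiction with $P^{k}=I$ (for $N_{2}$ this comes down to showing $b_{2}=b_{3}$, contradicting the definition of that normal form). You instead establish semisimplicity of $P^{-1}$ up front from the fact that $x^{k}-1$ has simple roots, build the symplectic eigenspace decomposition and a symplectic basis by hand (handling the Krein-sign ambiguity $R(\theta)$ versus $R(-\theta)=R(2\pi-\theta)$ and the real eigenvalues $\pm1$ correctly), and then transport the conjugate normal form into $\Omega^{0}(P^{-1})$ via the path $t\mapsto Q(t)^{-1}P^{-1}Q(t)$, using that symplectic conjugation preserves the unit-circle spectrum and the multiplicities $\nu_{\lambda}$. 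What your approach buys is self-containedness and a cleaner logical structure: in particular, the paper's elimination of Cases 1 and 3 tacitly assumes that the normal form supplied by Theorem 1.8.10 inherits the relation $N^{k}=I$ from $P^{-1}$, which does not follow merely from membership in $\Omega^{0}(P^{-1})$ (that set only controls the unit-circle spectrum and its multiplicities), whereas your argument never needs this because the normal form you produce is an actual symplectic conjugate of $P^{-1}$. The price is that you must carry out the standard but nontrivial linear algebra of the semisimple symplectic normal form (dualising and rescaling the real and imaginary parts of complex eigenvectors into a symplectic basis of each $V_{m}$); your sketch of that step is at the right level of detail and the relabelling of $R(-2\pi m/k)$ as $R(2\pi(k-m)/k)$ correctly reconciles the outcome with the statement of the lemma.
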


\begin{proof}
For $P \in Sp(2n)_{k}$, we have $\sigma(P^{-1})=\sigma(P)\subseteq\{1, e^{\frac{2\pi \sqrt{-1}}{k}}, e^\frac{4\pi \sqrt{-1}}{k}, \cdots, e^\frac{2(k-1)\pi \sqrt{-1}}{k}\}\subseteq \mathbf{U}$. By the Theorem 1.8.10 in \cite{Long1}, there exists $M_{1}(\omega_{1}) \diamond M_{2}(\omega_{2})\diamond \cdots \diamond  M_{s}(\omega_{s}) \in \Omega^{0}(P^{-1})$ where $M_{i}(\omega_{i})$ is a basic normal form of some eigenvalue  of $P^{-1}$, $1\leq i \leq s$. And the following are the basic normal forms for eigenvalues in $\mathbf{U}$.

\noindent {\it Case 1}. $N_{1}(\lambda,b) = \left( \begin{array}{cc}
\lambda  & b\\
0 & \lambda
\end{array} \right)$, $\lambda = \pm 1, b=\pm 1,0$.\\
Since $P \in Sp(2n)_{k}$, we have $b=0$ and $\lambda \in \{-1, 1\} \cap \sigma(P^{-1})$.

\noindent {\it Case 2}. $R(\theta)= \left( \begin{array}{cc}
\cos\theta  & -\sin \theta\\
\sin \theta & \cos \theta
\end{array} \right)$, $\theta \in (0, \pi) \cup (\pi, 2\pi)$.\\
Since $P \in Sp(2n)_{k}$, we have $\theta \in \{\frac{2\pi}{k}, \frac{4\pi}{k}, \cdots \frac{2(k-1)\pi}{k}\}$.

\noindent {\it Case 3}. $N_{2}(\omega,b) = \left( \begin{array}{cc}
R(\theta)  & b\\
0 & R(\theta)
\end{array} \right)$, $\theta \in (0, \pi) \cup (\pi, 2\pi)$, $b=\left( \begin{array}{cc}
b_{1}  & b_{2}\\
b_{3} & b_{4}
\end{array} \right), b_{i}\in \R, b_{2} \neq b_{3}$.

From direct computation, it is easy to check that the matrix
$T = \left(
 \begin{array}{cc}
 \frac{1}{\sqrt{2}} & \frac{\sqrt{-1} }{\sqrt{2}} \\
 \frac{\sqrt{-1}}{\sqrt{2}} & \frac{1}{\sqrt{2}} \\
 \end{array}
 \right)$
satisfies that $T R(\theta) T^{-1} =
\left(
 \begin{array}{cc}
  e^{\sqrt{-1}\theta} & 0 \\
  0 & e^{-\sqrt{-1}\theta} \\
  \end{array}
  \right)$.
Then
$\left(
\begin{array}{cc}
T & 0 \\
0 & T \\
\end{array}
\right)
N_{2}(\omega,b)
\left(
\begin{array}{cc}
T^{-1} & 0 \\
0 & T^{-1} \\
\end{array}
\right) =
\left(
  \begin{array}{cc}
    T R(\theta) T^{-1} & T b T^{-1} \\
    0 & T R(\theta) T^{-1} \\
  \end{array}
\right)$,
where
\begin{equation}
T b T^{-1} =
\left(
\begin{array}{cc}
\frac{1}{2}(b_{1}+b_{4})-\frac{\sqrt{-1}}{2}(b_{2}-b_{3}) & \frac{1}{2}(b_{2}+b_{3})-\frac{\sqrt{-1}}{2}(b_{1}-b_{4}) \\
\frac{1}{2}(b_{2}+b_{3})+\frac{\sqrt{-1}}{2}(b_{1}-b_{4}) & \frac{1}{2}(b_{1}+b_{4})+\frac{\sqrt{-1}}{2}(b_{2}-b_{3}) \\
\end{array}
\right).
\end{equation}

Denoted by
\begin{equation}
\left(
  \begin{array}{cc}
   T R(i\theta) T^{-1} & X(i) \\
    0 & T R(k\theta) T^{-1} \\
  \end{array}
\right) =
\left(
  \begin{array}{cc}
    T R(\theta) T^{-1} & T b T^{-1} \\
    0 & T R(\theta) T^{-1} \\
  \end{array}
\right)^{i}, \;i\in \mathbb{N}
\end{equation}
where
$X(i) =
\left(
  \begin{array}{cc}
    x_{1}(i) & x_{2}(i) \\
    x_{3}(i) & x_{4}(i) \\
  \end{array}
\right)$
and $X(1) = T b T^{-1} =
\left(
  \begin{array}{cc}
    x_{1}(1) & x_{2}(1) \\
    x_{3}(1) & x_{4}(1) \\
  \end{array}
\right)$.
By direct computation, we have
\begin{equation}
\begin{split}
x_{1}(k)&=ke^{\sqrt{-1}(k-1)\theta}x_{1}(1),\\
x_{4}(k)&=ke^{-\sqrt{-1}(k-1)\theta}x_{4}(1).
\end{split}
\end{equation}
Thus, from $P^{k}=I$ we have $X(k)=0$, so $x_{1}(1)=x_{4}(1)=0$, i.e.
\begin{equation*}
\begin{split}
\frac{1}{2}(b_{1}+b_{4})-\frac{\sqrt{-1}}{2}(b_{2}-b_{3})=0,\\
\frac{1}{2}(b_{1}+b_{4})+\frac{\sqrt{-1}}{2}(b_{2}-b_{3})=0.
\end{split}
\end{equation*}
Then we have $b_{2}=b_{3}$, which is contradict to the definition of the basic normal form $N_{2}(\omega,b)$.

Therefore, from (Case1)-(Case3),we get $M_{i}(\omega_{i})=R(\theta_{i})$ where $\theta_{i}\in \{0, \frac{2\pi}{k}, \frac{4\pi}{k}, \cdots \frac{2(k-1)\pi}{k}\}$, $1\leq i \leq s$. And the lemma is proved.
\end{proof}

For the notations in Lemma \ref{ll}, we define
\begin{equation*}
\begin{split}
Sp(2n)_{k}(r,p;j_{1},j_{2},\cdots,j_{r}) \equiv \left\{P \in Sp(2n)_{k} \mid k-2\sum_{m=1}^{r}m\cdot j_{m} >1 ,r < \frac{k}{2}\right\}.
\end{split}
\end{equation*}

Now we state the main result of this paper.

\begin{theorem}\label{theorem:1}
Suppose $P \in Sp(2n)_{k}(r,p;j_{1},j_{2},\cdots,j_{r})$, and the Hamiltonian function $H$ satisfies (H1)-(H5), then for every $\tau > 0$,
the system (\ref{1.1}) possesses a non-constant {\it $P$-solution} $(\tau, x)$ such that the minimal $P$-symmetric period of the extended $k\tau$-periodic solution $(k\tau, x^{k})$ is $k\tau$.
\end{theorem}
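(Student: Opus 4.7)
The plan is to combine the Clarke--Ekeland dual variational principle with the iteration theory of the Maslov $P$-index developed in the first author's earlier work on $P$-boundary problems. First I produce a $P$-solution as a minimiser of a dual action on a $P$-symmetric loop space, then I read off index information from the minimising property, and finally I rule out sub-iterates via the $P$-index iteration inequality, using the arithmetic conditions built into $Sp(2n)_{k}(r,p;j_{1},\dots,j_{r})$.

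For the existence part, I would use (H4) and (H5) to form the Legendre dual $G$ of $H$, which is $C^{2}$, strictly convex, and subquadratic. Working on
\begin{equation*}
E_{P}=\{u\in L^{2}([0,k\tau],\R^{2n})\mid u(t+\tau)=Pu(t),\; \int_{0}^{k\tau}u\,dt=0\},
\end{equation*}
I would consider the dual functional $\Psi(u)=\int_{0}^{k\tau}[\tfrac12\langle Ju,\pi u\rangle+G(-Ju)]\,dt$, where $\pi u$ is the mean-zero primitive of $u$. The $P$-symmetry of $H$ makes $\Psi$ invariant under the translation $u(\cdot)\mapsto P^{-1}u(\cdot+\tau)$, and under (H2)--(H4) it is coercive and weakly lower semicontinuous, so it attains a non-trivial minimiser $u^{*}\in E_{P}$. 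Then $\dot{x}=-Ju^{*}$ (with the constant of integration fixed by the $P$-boundary condition) yields a non-constant $P$-solution $(\tau,x)$ of (\ref{1.1}).

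Next I would extract the index of $x$. Because $u^{*}$ minimises $\Psi$ on $E_{P}$, the Morse index of $\Psi''(u^{*})$ on $E_{P}$ vanishes. Via the identification between this Morse index and the Maslov $P$-index (the relative Morse index machinery from the first author's earlier papers) and using the normal form $I_{2p}\diamond R(\tfrac{2\pi}{k})^{\diamond j_{1}}\diamond\cdots\diamond R(\tfrac{2r\pi}{k})^{\diamond j_{r}}\in\Omega^{0}(P^{-1})$ supplied by Lemma~\ref{ll}, I would translate the minimality of the Morse index into sharp numerical upper bounds on $i_{P}(x)$ and $i_{P}(x)+\nu_{P}(x)$ expressed in terms of $p$ and the $j_{m}$'s; convexity (H5) in turn forces a strictly positive mean Maslov $P$-index for $x$.

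The crux is ruling out smaller $P$-symmetric periods. Suppose the extended $k\tau$-periodic solution $x^{k}$ had minimal $P$-symmetric period $k\tau/l$ for some integer $l\geq2$. Then $x(t+\tau/l)=Px(t)$, so $x$ is, in the $P$-sense, the $l$-th iterate of a shorter $P$-solution $\tilde{x}$ of $P$-period $\tau/l$. The iteration inequality for the Maslov $P$-index yields a lower bound of the form
\begin{equation*}
i_{P}(x)+\nu_{P}(x)\;\geq\; l\bigl(i_{P}(\tilde x)+\nu_{P}(\tilde x)\bigr)-\varepsilon(P;l),
\end{equation*}
where the error $\varepsilon(P;l)$ depends on how each block $R(2m\pi/k)^{\diamond j_{m}}$ iterates. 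The main obstacle is to show that the hypotheses $k-2\sum_{m=1}^{r}m\cdot j_{m}>1$ and $r<k/2$ defining $Sp(2n)_{k}(r,p;j_{1},\dots,j_{r})$ are exactly what is needed to make the right-hand side above strictly exceed the upper bound obtained in the previous step for every $l\geq2$: the $R(2m\pi/k)$-block iterates cyclically with period controlled by $k$, so bounding $2m\cdot j_{m}$ away from $k$ guarantees enough additional Morse index contribution per iterate, and the strictly positive mean index from (H5) supplies the unbounded growth in $l$. Combining these two bounds produces the contradiction, forcing $l=1$ and completing the proof.
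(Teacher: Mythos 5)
There are genuine gaps, and the most serious one is at the very first step. Under (H3)--(H4) the Hamiltonian is superquadratic at infinity, so its Fenchel dual $G$ is \emph{subquadratic} at infinity ($G(y)\le c|y|^{\mu/(\mu-1)}+c'$ with $\mu/(\mu-1)<2$), while the quadratic part $\int\frac12\langle Ju,\pi u\rangle$ of your dual functional has infinitely many negative directions whose eigenvalues only decay like $1/j$; a subquadratic perturbation cannot control a negative definite quadratic form, so $\Psi$ is unbounded below on $E_{P}$ and has no minimiser. The Clarke--Ekeland minimisation you invoke is designed for the opposite (subquadratic at infinity, superquadratic at $0$) regime, which the paper explicitly contrasts with its own hypotheses. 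The paper instead quotes an existence theorem from \cite{Liu3} (Theorem \ref{theorem:3}) proved by minimax, which comes packaged with the two-sided estimate $\dim\ker_{\R}(P-I)+2-\nu^{P}(x)\le i^{P}(x)\le \dim\ker_{\R}(P-I)+1$; this, combined with the convexity lower bound $i^{P}(x)\ge\dim\ker_{\R}(P-I)$ from Remark \ref{remark:1}, is the actual index input. Your ``Morse index of the minimiser vanishes'' would, even if a minimiser existed, give a different (and here unusable) normalisation.

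The exclusion step is also underpowered. First, your reduction ``$x$ is the $l$-th iterate of a shorter $P$-solution of $P$-period $\tau/l$'' is not literally consistent: $x(t+\tau/l)=Px(t)$ gives $x(t+\tau)=P^{l}x(t)$, which together with $x(t+\tau)=Px(t)$ forces $(l-1)\tau/l$ to be a period of $x$; the paper's careful bookkeeping with $T=\tau/l$, the periods $(l-1)T$ and $kT$, and the observation that $x|_{[0,(l-1)T]}$ is the $\frac{l-1}{k}$-th iterate of $x|_{[0,kT]}$ is what makes the iteration lemmas applicable. Second, the paper's argument is two-staged: it first uses Lemma \ref{lemma:4}, $e(P^{-1})=2n$, the nullity $\nu\ge 1$ from autonomy, and the convexity bound $i_{1}\ge n$ for closed orbits (plus Lemma 4.1 of \cite{LL2}) to force $l-1=k$, i.e.\ the only surviving competitor is $P$-symmetric period $k\tau/(k+1)$ --- exactly the residual case left open by \cite{Liu3}. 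It then kills that case by the \emph{left} inequality of Lemma \ref{lemma:3} with $m=k+1$, where the correction term $(k+1)i_{1}(\xi)-i(\xi,k+1)$ is computed \emph{exactly} via splitting numbers in Lemma \ref{lemma:6} to equal $\sum_{m=1}^{r}(k-2m)j_{m}-kp$; substituting $\dim\ker_{\R}(P-I)=2p$ and $n=p+\sum j_{m}$ yields $k-2\sum m\,j_{m}\le 1$, contradicting the definition of $Sp(2n)_{k}(r,p;j_{1},\dots,j_{r})$. Your single inequality with an unspecified error $\varepsilon(P;l)$ and an appeal to ``unbounded growth in $l$'' cannot reach this conclusion, because at $l=k+1$ the two sides differ by exactly $1$: one needs the precise value of the correction term, not an asymptotic estimate.
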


In order to prove the above result, we need to obtain the relationship between the Maslov $P$-index and Morse index. Thus we organize this paper as follows, in Section 2, we recall the definition and properties of the Maslov $P$-index theory, and we also list out the relationship between the  Morse index and the Maslov $P$-index (see \cite{LT1},\cite{LT2}, \cite{Liu3}, \cite{Fei} and \cite{FQ}). In Section 3, we first study the iteration formila of Maslov index of paths $\xi \in \mathcal P_{\tau}(2n)$ such that $\xi(\tau) = P^{-1}$ in detail, then we will give the complete proof of the main result.

\section{Preliminaries}
\setcounter{equation}{0}
In this section, we give a brief introduction to the Maslov $P$-index and its iteration properties, and then give the relationship between Maslov $P$-index and the relative Morse index which is studied by the first author of this paper in \cite{Liu3}.

Maslov $P$-index was first studied in \cite{Dong} and \cite{Liu1} independently for any symplectic matrix $P$ with different treatment. The first author and S. Tang in \cite{LT1,LT2} defined the Maslov $(P,\omega)$-index $(i_{\omega}^{P}(\gamma), \nu_{\omega}^{P}(\gamma))$ for any symplectic path $\gamma \in \mathcal P_{\tau}(2n)$. And then the first author of this paper used relative index theory to develop Maslov $P$-index in \cite{Liu3} which is consistent with the definition in \cite{LT1,LT2}. When the symplectic matrix $P=diag \{-I_{n-\kappa}, I_{\kappa}, -I_{n-\kappa}, I_{\kappa} \}$, $0\leq \kappa \in \N \leq n$, the $(P,\omega)$-index theory and its iteration theory were studied in \cite{DL} and then be successfully used to study the multiplicity of closed characteristics on partially symmetric convex compact hypersurfaces in $\R^{2n}$. Here we use the notions and results in \cite{Liu1,LT1,LT2}.

For $\omega \in \mathbf{U}$, then the Maslov $(P,\omega)$-index of a symplectic path $\gamma \in \mathcal P_{\tau}(2n)$ is defined as a pair of integers(cf.\cite{LT1})
$$(i_{\omega}^{P}(\gamma), \nu_{\omega}^{P}(\gamma))\in \Z\times \{0,1,\cdots,2n\},$$
where the index part
\begin{equation}
i_{\omega}^{P}(\gamma)= i_{\omega}(P^{-1}\gamma \ast \xi) - i_{\omega}(\xi),
\end{equation}
$\xi \in \mathcal P_{\tau}(2n)$ such that $\xi(\tau)=P^{-1}$ and the nullity
\begin{equation}
 \nu_{\omega}^{P}(\gamma)=\dim \ker(\gamma(\tau)-\omega P).
 \end{equation}
Suppose $B(t)\in C(\R, \mathcal{L}_{s}(\R ^{2n}))$, if $\gamma \in \mathcal P_{\tau}(2n)$ is the fundamental solution of the linear Hamiltonian systems
\begin{equation}\label{2}
\dot y(t)= JB(t)y,\ \ \ y\in \R^{2n},
\end{equation}
we also call $(i_{\omega}^{P}(\gamma), \nu_{\omega}^{P}(\gamma))$ the Maslov $(P,\omega)$-index of $B(t)$, denoting
$(i_{\omega}^{P}(B), \nu_{\omega}^{P}(B))=(i_{\omega}^{P}(\gamma), \nu_{\omega}^{P}(\gamma))$, just as in \cite{Liu1,LT1,LT2}.
 If $x$ is a $P$-solution of (\ref{1.1}), then the Maslov $(P,\omega)$-index of the solution $x$ is defined to be the Maslov $(P,\omega)$-index of $B(t)=H^{\prime\prime}(x(t))$ and denoted by $(i_{\omega}^{P}(x), \nu_{\omega}^{P}(x))$. When $\omega = 1$, we omit the subindex, denoted by $(i^{P}(\gamma), \nu^{P}(\gamma))$ or $(i^{P}(B), \nu^{P}(B))$ for simplicity.

For $m\in \N$, we extend the definition of $x(t)$ which is the solution of (1.1) to $[0, +\infty)$ by
\begin{equation*}
x(t) = P^{j}x(t-j\tau),\ \ \forall j\tau \leq t \leq (j+1)\tau,\ j\in \N
\end{equation*}
and define the $m$-th iteration $x^{m}$ of $x$ by
\begin{equation*}
x^{m} = x \vert_{[0, m\tau]}.
\end{equation*}
If $P$ satisfies $(P)_{k}$ condition, then $x^{k}$ becomes an $k\tau$-periodic solution of the Hamiltonian system in (1.1). We know that the fundamental solution $\gamma_{x} \in \mathcal P_{\tau}(2n)$ carries significant information about $x$. For any $\gamma \in \mathcal P_{\tau}(2n)$,  S. Tang and  the first author  of this paper have defined the corresponding $m$-th iteration path $\gamma^{m} : [0, m\tau] \to Sp(2n)$ of $\gamma$ in \cite{LT1} by
\begin{equation}\label{4}
\gamma^{m}(t) =
\begin{cases}
\gamma(t), & t \in [0, \tau],\\
P\gamma(t-\tau)P^{-1}\gamma(\tau), & t \in [\tau, 2\tau],\\
P^{2}\gamma(t-2\tau)(P^{-1}\gamma(\tau))^{2}, & t \in [2\tau, 3\tau],\\
P^{3}\gamma(t-3\tau)(P^{-1}\gamma(\tau))^{3}, & t \in [3\tau, 4\tau],\\
\cdots \cdots \\
P^{m-1}\gamma(t-(m-1)\tau)(P^{-1}\gamma(\tau))^{m-1}, & t \in [(m-1)\tau, m\tau].\\
\end{cases}
\end{equation}
If the matrix function $B(t)$ in the linear Hamiltonian system (\ref{2}) satisfies $P^{T}B(t+\tau)P = B(t)$, the iteration of its fundamental solution $\gamma$ is defined in the same way.

Corresponding we set
\begin{equation*}
i_{\omega}^{P^{m}}(\gamma, m) = i_{\omega}^{P^{m}}(\gamma^{m}),\ \ \nu_{\omega}^{P^{m}}(\gamma, m) = \nu_{\omega}^{P^{m}}(\gamma^{m}).
\end{equation*}

If the subindex $\omega = 1$, we simply write $(i^{P^{m}}(\gamma, m), \nu^{P^{m}}(\gamma, m))$, and omit the subindex $1$ when there is no confusion. In the sequel, we use the notions $(i(\gamma), \nu(\gamma))$  and $(i(\gamma, m), \nu(\gamma, m))$ to denote the Maslov-type index and the iterated index of symplectic path $\gamma$ with the periodic boundary condition which were introduced by Y. Long and his collaborators (cf.\cite{LL1}, \cite{LL2}, \cite{Long1}, \cite{LZ2}, etc.).\\

In \cite{LT1,LT2},  S. Tang and the first author of this paper obtained the important Bott-type formula and iteration inequalities for Malsov $(P,\omega)$-index as follows.

\begin{lemma}(\cite{LT1}, Bott-type iteration formula)\label{lemma:1}
For any $\tau > 0$, $\gamma \in \mathcal P_{\tau}(2n)$ and $m \in \N$, there hold
\begin{equation}
i^{P^{m}}_{\omega_0}(\gamma, m) = \sum_{\omega^{m}=\omega_0}i^{P}_{\omega}(\gamma),    \ \ \nu^{P^{m}}_{\omega_0}(\gamma, m) = \sum_{\omega^{m}=\omega_0}\nu^{P}_{\omega}(\gamma),
\end{equation}
\end{lemma}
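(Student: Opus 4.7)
The plan is to reduce the Bott-type formula to two independent decompositions: an algebraic eigenspace decomposition of the monodromy for the nullity part, and a functional-analytic $\mathbb{Z}_m$-equivariant decomposition of the relevant Fredholm operator for the index part. Throughout, I will write $N := P^{-1}\gamma(\tau)$, the "$P$-monodromy" of $\gamma$, which is the natural object that gets iterated.

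For the nullity, I would first apply the iteration formula (\ref{4}) at the endpoint $t = m\tau$ and compute, by induction on $m$, that
\begin{equation*}
\gamma^{m}(m\tau) \;=\; P^{m-1}\gamma(\tau)\,(P^{-1}\gamma(\tau))^{m-1} \;=\; P^{m}\,N^{m}.
\end{equation*}
Hence $\ker(\gamma^{m}(m\tau)-\omega_{0}P^{m}) = \ker(N^{m}-\omega_{0}I)$, so $\nu^{P^m}_{\omega_0}(\gamma,m)=\dim\ker(N^m-\omega_0 I)$. Since $x^{m}-\omega_{0}=\prod_{\omega^{m}=\omega_{0}}(x-\omega)$ factors into distinct linear factors, the Chinese Remainder Theorem on $\mathbb{C}[x]/(x^{m}-\omega_{0})$ (equivalently, a partial-fraction/spectral projector argument) gives
\begin{equation*}
\ker(N^{m}-\omega_{0}I)\;=\;\bigoplus_{\omega^{m}=\omega_{0}}\ker(N-\omega I),
\end{equation*}
and taking dimensions yields the second identity of the lemma (recalling that $\nu^{P}_{\omega}(\gamma)=\dim\ker(N-\omega I)$).

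For the index, I would pass to the relative Morse index description of $(P,\omega)$-index that the authors recall in Section~2 (following \cite{Liu3,LT1,LT2}). Fix a symmetric family $B(t)$ whose fundamental solution is $\gamma$ and suitably perturb so that $\gamma^{m}$ is transversal; then $i^{P^{m}}_{\omega_{0}}(\gamma,m)$ equals (up to a common constant independent of $B$) the relative Morse index on $E_{m}:=L^{2}([0,m\tau],\mathbb{R}^{2n})$ of the self-adjoint operator $A_{m}:=-J\tfrac{d}{dt}-B(t)$ with the boundary condition $x(m\tau)=\omega_{0}P^{m}x(0)$. The key step is to decompose $E_{m}$ under the shift-twist operator $(T_{P}x)(t)=P^{-1}x(t+\tau)$ (extending $x$ by the boundary rule). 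Its $m$-th power acts on $E_{m}$ as multiplication by $\omega_{0}$, so its spectrum on $E_{m}$ is exactly $\{\omega:\omega^{m}=\omega_{0}\}$, and we obtain an orthogonal decomposition
\begin{equation*}
E_{m} \;=\; \bigoplus_{\omega^{m}=\omega_{0}} E_{\omega}, \qquad
E_{\omega}=\{x\in E_{m}\mid x(t+\tau)=\omega P\,x(t)\}.
\end{equation*}
Because $B(t)$ is $P$-equivariantly $\tau$-periodic (i.e.\ $P^{T}B(t+\tau)P=B(t)$), the operator $A_{m}$ commutes with $T_{P}$ and therefore splits as $A_{m}=\bigoplus_{\omega^{m}=\omega_{0}} A_{\omega}$, where $A_{\omega}$ is unitarily equivalent (via the "unrolling" isomorphism $E_{\omega}\to L^{2}([0,\tau],\mathbb{R}^{2n})$ defined by restriction to $[0,\tau]$) to the operator computing $i^{P}_{\omega}(\gamma)$. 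Additivity of the relative Morse index over invariant orthogonal direct sums then gives
\begin{equation*}
i^{P^{m}}_{\omega_{0}}(\gamma,m)\;=\;\sum_{\omega^{m}=\omega_{0}} i^{P}_{\omega}(\gamma),
\end{equation*}
which is the first identity.

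The main obstacle is the index step: one must verify cleanly that the shift-twist decomposition is compatible with the concatenation-based definition (\ref{4}) of $\gamma^{m}$ and with the boundary conditions used to define $i^{P^{m}}_{\omega_{0}}$, and that the normalizing reference path $\xi$ appearing in the definition $i^{P}_{\omega}(\gamma)=i_{\omega}(P^{-1}\gamma\ast\xi)-i_{\omega}(\xi)$ also decomposes additively so that the constants cancel. This amounts to checking that the "unrolling" intertwines the $\omega_{0}$-twisted $P^{m}$-boundary problem on $[0,m\tau]$ with the $\omega$-twisted $P$-boundary problem on $[0,\tau]$, and is the one place where the $P$-twisted periodicity of $B(t)$ is genuinely used; everything else is then a direct application of standard additivity properties of the Maslov-type index already established in \cite{Liu1,LT1,LT2}.
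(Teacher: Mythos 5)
The paper states this lemma without proof, quoting it from \cite{LT1}, so there is no in-paper argument to compare against; your proposal follows what is essentially the standard Bott decomposition, which is also the route of the cited reference. Your nullity argument is correct and complete: $\gamma^{m}(m\tau)=P^{m}N^{m}$ with $N=P^{-1}\gamma(\tau)$ gives $\nu^{P^{m}}_{\omega_0}(\gamma,m)=\dim_{\C}\ker(N^{m}-\omega_0 I)$, and the splitting over the distinct roots of $x^{m}-\omega_0$ yields the sum (just say explicitly that all kernels are taken in $\C^{2n}$, since $\omega$ is complex). For the index part the plan is right, but one intermediate assertion is not literally true: $T_{P}$ does \emph{not} commute with $-J\frac{d}{dt}-B(t)$ as an operator for general symplectic $P$, because conjugating $-J\frac{d}{dt}$ by $T_{P}$ replaces $J$ by $PJP^{-1}=PP^{T}J$, which equals $J$ only when $P$ is orthogonal. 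What the argument actually needs --- and what does hold --- is invariance of the quadratic form: $(P^{-1})^{T}JP^{-1}=J$ and $P^{T}B(t+\tau)P=B(t)$ give $Q(T_{P}x,T_{P}y)=Q(x,y)$. Combined with the purely algebraic decomposition $E_{m}=\bigoplus_{\omega^{m}=\omega_0}E_{\omega}$ (valid since $T_{P}^{m}=\omega_0 I$ forces diagonalizability), form-invariance gives $Q(x,y)=\omega\overline{\omega'}\,Q(x,y)$ for $x\in E_{\omega}$, $y\in E_{\omega'}$, hence $Q$-orthogonality of distinct summands and additivity of the relative Morse indices; so the step is repairable, but as written it is wrong. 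The remaining issue you flag yourself --- that the normalizations by $\xi$ and by $\xi_{m}$ with $\xi_{m}(m\tau)=P^{-m}$ cancel so that the identity holds with no correction term --- is a genuine piece of bookkeeping carried out in \cite{LT1} and cannot be waved away; also, boundary values require working in $W^{1/2,2}$ (as in the paper's space $W_{P}$) rather than $L^{2}$.
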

\noindent where $P\in Sp(2n)$ and $\omega_{0}\in \bf U$.

\begin{lemma}(\cite{LT2})\label{lemma:2}
 For any path $\gamma \in \mathcal P_{\tau}(2n)$, $P \in Sp(2n)$ and $\omega \in \mathbf{U} \setminus \{1\}$, it always holds that
\begin{equation}\label{12}
i^{P}(\gamma, 1) + \nu^{P}(\gamma, 1) - n + i_{1}(\xi) - i_{\omega}(\xi) \leq i_{\omega}^{P}(\gamma) \leq i^{P}(\gamma, 1) + n - \nu_{\omega}^{P}(\gamma) + i_{1}(\xi) - i_{\omega}(\xi).
\end{equation}
\end{lemma}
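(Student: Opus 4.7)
The plan is to reduce the stated $(P,\omega)$-inequality to Long's classical $\omega$-index estimate for an auxiliary ordinary symplectic path. Fix $\xi \in \mathcal P_\tau(2n)$ with $\xi(\tau) = P^{-1}$ and set $\beta := P^{-1}\gamma * \xi \in \mathcal P_{2\tau}(2n)$. By the definition of the Maslov $(P,\omega)$-index recalled earlier,
$$i^P_\omega(\gamma) = i_\omega(\beta) - i_\omega(\xi), \qquad i^P(\gamma,1) = i_1(\beta) - i_1(\xi).$$
Substituting these into \eqref{12}, the $\pm i_1(\xi)$ and $\pm i_\omega(\xi)$ contributions on each side cancel, so \eqref{12} is equivalent to
$$i_1(\beta) + \nu^P(\gamma,1) - n \;\leq\; i_\omega(\beta) \;\leq\; i_1(\beta) + n - \nu^P_\omega(\gamma).$$

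Next I would re-express the $P$-nullities as ordinary $\omega$-nullities of $\beta$. Unfolding the concatenation convention of \cite{LT1}, the second half of $\beta$ is $P^{-1}\gamma$ shifted by $\tau$ and glued to $\xi$ at the common value $P^{-1}$, so the endpoint is $\beta(2\tau) = P^{-1}\gamma(\tau)$. Consequently
$$\nu_\omega(\beta) := \dim\ker\bigl(\beta(2\tau) - \omega I_{2n}\bigr) = \dim\ker\bigl(P^{-1}\gamma(\tau) - \omega I\bigr) = \dim\ker\bigl(\gamma(\tau) - \omega P\bigr) = \nu^P_\omega(\gamma),$$
and likewise $\nu_1(\beta) = \nu^P(\gamma,1)$. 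The target inequality thereby reduces to the purely classical statement
$$i_1(\beta) + \nu_1(\beta) - n \;\leq\; i_\omega(\beta) \;\leq\; i_1(\beta) + n - \nu_\omega(\beta), \qquad \omega \in \mathbf{U}\setminus\{1\},$$
for the ordinary symplectic path $\beta \in \mathcal P_{2\tau}(2n)$.

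This last inequality is Long's classical $\omega$-index estimate for arbitrary symplectic paths (see \cite{Long1}); applying it to $\beta$ finishes the proof. The only genuinely non-routine step is the concatenation bookkeeping in the first paragraph: one must verify simultaneously the endpoint computation $\beta(2\tau) = P^{-1}\gamma(\tau)$ (so that the nullities align as claimed) and the additivity $i_\omega(\beta) = i_\omega(\xi) + i^P_\omega(\gamma)$ used to clear the $\xi$-terms. I expect this convention-tracking to be the main, though essentially mechanical, obstacle; once it is done, the reduction to Long's theorem is immediate and does not interact with the symmetry $P$ in any further way.
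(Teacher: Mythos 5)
The paper states this lemma purely as a citation to \cite{LT2} and contains no proof of its own, so there is nothing internal to compare against; judged on its own terms, your reduction is correct and is the natural (and almost certainly the original) argument. The bookkeeping you flag does work out: under the concatenation convention of \cite{LT1} and \cite{Long1}, the path $\beta=P^{-1}\gamma\ast\xi$ traverses $\xi$ first and ends at $P^{-1}\gamma(\tau)$, so $\nu_{\omega}(\beta)=\dim\ker(\gamma(\tau)-\omega P)=\nu^{P}_{\omega}(\gamma)$ and $\nu_{1}(\beta)=\nu^{P}(\gamma,1)$, the $\xi$-terms cancel exactly as you compute, and the residual inequality $i_{1}(\beta)+\nu_{1}(\beta)-n\leq i_{\omega}(\beta)\leq i_{1}(\beta)+n-\nu_{\omega}(\beta)$ for $\omega\in\mathbf{U}\setminus\{1\}$ is precisely the classical $\omega$-index estimate of Liu--Long (see \cite{LL2}, or Chapter 10 of \cite{Long1}) applied to the ordinary path $\beta$.
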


\begin{lemma}(\cite{LT2}, iteration inequality )\label{lemma:3}
For any path $\gamma \in \mathcal P_{\tau}(2n)$, $P \in Sp(2n)$ and $m \in \mathbf{N}$,
\begin{equation}\label{14}
  \begin{split}
  &m(i^{P}(\gamma, 1) + \nu^{P}(\gamma, 1) - n) + n -\nu^{P}(\gamma, 1) + mi_{1}(\xi)-i(\xi, m)\\
  &\leq i^{P^{m}}(\gamma, m)\\
  &\leq m(i^{P}(\gamma, 1) + n) - n - (\nu^{P^{m}}(\gamma, m) - \nu^{P}(\gamma, 1)) + mi_{1}(\xi)-i(\xi, m).
  \end{split}
  \end{equation}
\end{lemma}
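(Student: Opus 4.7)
The plan is to derive the iteration inequality by combining the Bott-type formula (Lemma \ref{lemma:1}) with the pointwise $\omega$-inequality (Lemma \ref{lemma:2}), summed over the $m$-th roots of unity. The underlying observation is that $i^{P^{m}}(\gamma,m)$ is a finite sum of $i_{\omega}^{P}(\gamma)$, so upper and lower bounds on the summands become upper and lower bounds on the iterated index, and the "correction" terms involving $\xi$ will reassemble into $i(\xi,m)$ via the usual Bott formula for the ordinary Maslov-type index.

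Concretely, first I apply Lemma \ref{lemma:1} with $\omega_{0}=1$ to write
\begin{equation*}
i^{P^{m}}(\gamma,m)=\sum_{\omega^{m}=1}i_{\omega}^{P}(\gamma),\qquad \nu^{P^{m}}(\gamma,m)=\sum_{\omega^{m}=1}\nu_{\omega}^{P}(\gamma).
\end{equation*}
I separate off the $\omega=1$ term, which equals $i^{P}(\gamma,1)$ (with nullity $\nu^{P}(\gamma,1)$), so that only the $m-1$ terms with $\omega\neq 1$ need to be estimated. To these I apply Lemma \ref{lemma:2}: the left inequality of (\ref{12}) gives the lower bound
\begin{equation*}
i_{\omega}^{P}(\gamma)\geq i^{P}(\gamma,1)+\nu^{P}(\gamma,1)-n+i_{1}(\xi)-i_{\omega}(\xi),
\end{equation*}
and the right inequality gives the upper bound
\begin{equation*}
i_{\omega}^{P}(\gamma)\leq i^{P}(\gamma,1)+n-\nu_{\omega}^{P}(\gamma)+i_{1}(\xi)-i_{\omega}(\xi).
\end{equation*}

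Summing the lower bound over $\omega^{m}=1$, $\omega\neq 1$, and adding back the $\omega=1$ term yields a total of $m\cdot i^{P}(\gamma,1)+(m-1)(\nu^{P}(\gamma,1)-n)+(m-1)i_{1}(\xi)-\sum_{\omega^{m}=1,\omega\neq 1}i_{\omega}(\xi)$. The key simplification is to reabsorb the missing $\omega=1$ contribution on both sides: writing $(m-1)i_{1}(\xi)-\sum_{\omega\neq 1}i_{\omega}(\xi)=mi_{1}(\xi)-\sum_{\omega^{m}=1}i_{\omega}(\xi)$ and invoking the ordinary Bott formula $\sum_{\omega^{m}=1}i_{\omega}(\xi)=i(\xi,m)$ produces exactly $mi_{1}(\xi)-i(\xi,m)$, and the remaining algebra collapses $(m-1)(\nu^{P}(\gamma,1)-n)$ together with the $\omega=1$ contribution into $m(\nu^{P}(\gamma,1)-n)+n-\nu^{P}(\gamma,1)$, matching the stated lower bound. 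The upper bound is handled in the same way, except that the sum of nullities $\sum_{\omega\neq 1}\nu_{\omega}^{P}(\gamma)$ must be rewritten as $\nu^{P^{m}}(\gamma,m)-\nu^{P}(\gamma,1)$ using the Bott identity for nullities.

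The main obstacle is purely bookkeeping: one has to keep track carefully of which terms correspond to $\omega=1$ versus $\omega\neq 1$, and recognize at the right moment that adding back the $\omega=1$ summands on the $i_{\omega}(\xi)$ and $\nu_{\omega}^{P}(\gamma)$ sides is precisely what converts partial sums into the full Bott sums $i(\xi,m)$ and $\nu^{P^{m}}(\gamma,m)$. No additional deep input is required: the Bott-type formula supplies the decomposition, Lemma \ref{lemma:2} supplies the pointwise estimates, and the closed forms on both sides of (\ref{14}) are obtained by collecting like terms.
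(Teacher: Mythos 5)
Your derivation is correct: summing the two-sided estimate of Lemma \ref{lemma:2} over the $m$-th roots of unity $\omega\neq 1$, adding the exact $\omega=1$ term $i^{P}(\gamma,1)$, and converting the partial sums $\sum_{\omega^{m}=1}i_{\omega}(\xi)$ and $\sum_{\omega^{m}=1}\nu^{P}_{\omega}(\gamma)$ into $i(\xi,m)$ and $\nu^{P^{m}}(\gamma,m)$ via the Bott formulas reproduces both sides of (\ref{14}) exactly, and the bookkeeping checks out. The paper itself gives no proof of this lemma (it is quoted from \cite{LT2}), but your argument is precisely the standard route by which such iteration inequalities are obtained there, so there is nothing further to add.
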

Let $e(M)$ be the elliptic height of symplectic matrix $M$ just as the same in \cite{Long1}, the following lemma is important for the proof of Theorem \ref{theorem:1}.
\begin{lemma}[\cite{LT2}]\label{lemma:4}
For any path $\gamma \in \mathcal P_{\tau}(2n)$, $P \in Sp(2n)$, set $M = \gamma(\tau)$ and extend $\gamma$ to $[0, \infty)$ by (\ref{4}). Then for any $m \in \mathrm{N}$ we have
\begin{equation}
\begin{aligned}
&\nu^{P^{m}}(\gamma, m) - \nu(\xi, 1) + \nu(\xi, m+1) - \frac{e(P^{-1}M)}{2} - \frac{e(P^{-1})}{2}\\
&\leq i^{P^{(m+1)}}(\gamma, m+1) - i^{P^{m}}(\gamma, m) - i^{P}(\gamma, 1)\\
&\leq \nu^{P}(\gamma, 1) - \nu^{P^{(m+1)}}(\gamma, m+1) - \nu(\xi, m) + \frac{e(P^{-1}M)}{2} + \frac{e(P^{-1})}{2}.
\end{aligned}
\end{equation}
\end{lemma}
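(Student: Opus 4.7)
The strategy is to unfold $i^{P^{m}}(\gamma,m)$ via its very definition into a difference of two ordinary Maslov-type indices, and then apply Long's one-step iteration inequality separately to each of them.

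Fix $\xi\in\mathcal P_\tau(2n)$ with $\xi(\tau)=P^{-1}$ and set $\eta:=(P^{-1}\gamma)\ast\xi$, the standard concatenated path in $\mathcal P_\tau(2n)$ (after reparameterization) running from $I$ to $P^{-1}M$. The defining equality $i^{P}(\gamma,1)=i(\eta,1)-i(\xi,1)$ must first be upgraded to the key identity
\begin{equation*}
i^{P^{m}}(\gamma,m)\;=\;i(\eta,m)-i(\xi,m)\qquad\text{for every }m\ge 1,
\end{equation*}
where both indices on the right refer to the ordinary, standard periodic $m$-iterate. The subtlety is that the $P$-iteration rule (\ref{4}) used to assemble $\gamma^{m}$ (and $\xi^{m}$) differs from the standard periodic iteration by conjugations by powers of $P$, so establishing this identity requires producing a homotopy with fixed endpoints between the path $(P^{-m}\gamma^{m})\ast\xi^{m}$ appearing in the definition of $i^{P^{m}}(\gamma^{m})$ and the standard periodic $m$-iterate $\eta^{m}$ of $\eta$, invoking the homotopy invariance of the Maslov-type index. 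A direct calculation via (\ref{4}) yields the convenient identity $\gamma^{m}(m\tau)=P^{m}(P^{-1}M)^{m}$, which simultaneously drives the homotopy and delivers the nullity identifications $\nu(\eta,m)=\dim\ker((P^{-1}M)^m-I)=\nu^{P^{m}}(\gamma,m)$ for every $m$; in particular $\nu(\eta,1)=\nu^{P}(\gamma,1)$ and $\nu(\eta,m+1)=\nu^{P^{m+1}}(\gamma,m+1)$.

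Once the reduction identity is in hand, quote Long's abstract one-step iteration inequality from \cite{Long1}: for any $\alpha\in\mathcal P_\tau(2n)$ with $\alpha(\tau)=N$,
\begin{equation*}
\nu(\alpha,m)-\frac{e(N)}{2}\;\le\;i(\alpha,m+1)-i(\alpha,m)-i(\alpha,1)\;\le\;\nu(\alpha,1)-\nu(\alpha,m+1)+\frac{e(N)}{2}.
\end{equation*}
Apply this once with $\alpha=\eta$ (so $N=P^{-1}M$) and once with $\alpha=\xi$ (so $N=P^{-1}$). Subtracting and using the reduction identity gives
\begin{equation*}
i^{P^{m+1}}(\gamma,m+1)-i^{P^{m}}(\gamma,m)-i^{P}(\gamma,1)=\bigl[i(\eta,m+1)-i(\eta,m)-i(\eta,1)\bigr]-\bigl[i(\xi,m+1)-i(\xi,m)-i(\xi,1)\bigr].
\end{equation*}
The asserted upper bound follows by pairing the $\eta$-upper bound with the $\xi$-lower bound, and the asserted lower bound by pairing the $\eta$-lower bound with the $\xi$-upper bound; substituting the nullity identifications of the previous paragraph produces exactly the two displayed inequalities.

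\textbf{Main obstacle.} The final arithmetic combination is routine, and Long's one-step inequality is taken off the shelf. The technical heart is the reduction identity $i^{P^{m}}(\gamma,m)=i(\eta,m)-i(\xi,m)$: one must reconcile the $P$-iteration rule (\ref{4}) with the standard periodic iteration so that the formula $i^{P}(\gamma)=i(P^{-1}\gamma\ast\xi)-i(\xi)$ at level $m=1$ propagates cleanly to every $m$. This rests on the end-point identity $\gamma^{m}(m\tau)=P^{m}(P^{-1}M)^{m}$ together with homotopy invariance of the Maslov-type index within $Sp(2n)$, and is the only nontrivial point of the proof.
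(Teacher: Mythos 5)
The paper does not prove this lemma — it is quoted verbatim from \cite{LT2} — but your argument is correct and is essentially the standard proof of that result: the shape of the bounds (the two separate elliptic-height terms $\tfrac{e(P^{-1}M)}{2}$ and $\tfrac{e(P^{-1})}{2}$ and the mixed $\nu(\xi,\cdot)$ terms) comes exactly from applying Long's one-step iteration inequality to $\eta=P^{-1}\gamma\ast\xi$ and to $\xi$ separately and subtracting, as you do. One simplification: the reduction identity $i^{P^{m}}(\gamma,m)=i(\eta,m)-i(\xi,m)$, which you single out as the technical heart and propose to establish by a homotopy, follows in one line from the Bott-type formula of Lemma \ref{lemma:1} together with the classical Bott formula $i(\eta,m)=\sum_{\omega^{m}=1}i_{\omega}(\eta)$, since $i^{P}_{\omega}(\gamma)=i_{\omega}(\eta)-i_{\omega}(\xi)$ by definition; the endpoint computation $\gamma^{m}(m\tau)=P^{m}(P^{-1}M)^{m}$ is then only needed for the nullity identifications, which you state correctly.
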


Let $S_{k\tau}=\R/(k\tau\Z)$, $W_{P}=\{z\in W^{1/2,2}(S_{k\tau}, \R^{2n})\mid z(t+\tau)=Pz(t)\}$ be a closed subspace of $W^{1/2,2}(S_{k\tau}, \R^{2n})$. It is also a Hilbert space with norm $\Vert\cdot\Vert$ and inner product $\langle\cdot, \cdot\rangle$ as in $W^{1/2,2}(S_{k\tau}, \R^{2n})$. We denote by $\Vert \cdot \Vert_{s}$ the $L^{s}$-norm for $s\geq 1$. By the well-known Sobolev embedding theorem, we have the following embedding property: for any $s\in [1, +\infty)$, there is a constant $\alpha_{s}>0$ such that
\begin{equation}\label{34}
\Vert z \Vert_{s} \leq \alpha_{s}\Vert z \Vert,\ \ \forall z\in W_{P}.
\end{equation}
 Let $\mathcal{L}_{s}(W_{P})$ and  $\mathcal{L}_{c}(W_{P})$ denote the space of the bounded self-adjoint linear operator and compact linear operator on $W_{P}$. We define two operators $A$, $B\in \mathcal{L}_{s}(W_{P})$ by the following bilinear forms:
\begin{equation}\label{3}
\langle Ax, y \rangle = \int_{0}^{\tau}(-J\dot{x}(t), y(t))dt,\ \ \langle Bx, y \rangle = \int_{0}^{\tau}(B(t)x(t), y(t))dt.
\end{equation}

Suppose that $\cdots \leq \lambda_{-j}\leq \cdots \leq \lambda_{-1}<0<\lambda_{1}\leq \cdots \leq \lambda_{j}\leq \cdots$ are all nonzero eigenvalues of $A$ (count with multiplicity), correspondingly, $e_{j}$ is the eigenvector of $\lambda_{j}$ satisfying $\langle e_{j}, e_{i}\rangle=\delta_{ji}$.
We denote the kernel of $A$ by $W_{P}^{0}$ which is exactly the space $\ker_{\R}(P-I)$.
For $m \in \N$, define the finite dimensional subspace of $W_{P}$ by
\begin{equation*}
W^{m}_{P} = W_{m}^{-}\oplus W_{P}^{0}\oplus W_{m}^{+}
\end{equation*}
with $W_{m}^{-}=\{z\in W_{P}\vert z(t)=\sum_{j=1}^{m}a_{-j}e_{-j}(t), a_{-j}\in\R\}$ and $W_{m}^{+}=\{z\in W_{P}\vert z(t)=\sum_{j=1}^{m}a_{j}e_{j}(t),\\
a_{j}\in\R\}$. Suppose $P_{m}$ is the orthogonal projections $P_{m}: W_{P} \to W_{P}^{m}$ for $m\in\N\cup \{0\}$. Then $\{P_{m} \mid m=0, 1, 2, \cdots\}$ is the Galerkin approximation sequence respect to $A$.

For a self-adjoint operator $T$, we denote by $M^{\ast}(T)$ the eigenspaces of $T$ with eigenvalues belonging to $(0,+\infty),\{0\}$ and
$(-\infty,0)$ with $\ast = +,0$ and $\ast = -$, respectively.
And the dimension of eigenspaces $M^{\ast}(T)$ is denoted by $m^{\ast}(T)= \dim M^{\ast}(T)$. Similarly, we denote by $M_{d}^{\ast}(T)$ the eigenspaces of $T$ with eigenvalues belonging to $(d,+\infty),(-d, d)$ and
$(-\infty,-d)$ with $\ast = +,0$ and $\ast = -$, respectively. we denote $m_{d}^{\ast}(T)= \dim M_{d}^{\ast}(T)$.
For any adjoint operator $L$, we denote $L^{\sharp}= (L|_{Im L})^{-1}$.

The following theorem gives the relationship between the Maslov $P$-index and  Morse index for any $P \in Sp(2n)$.

\begin{theorem}(\cite{Liu3}, Lemma 3.2 and Theorem 4.6)\label{theorem:2}
For $P\in Sp(2n)$, suppose that $B(t)\in \C(\R, \mathcal{L}_{s}(\R^{2n}))$ and $P^{T}B(t+\tau)P = B(t)$ with the Maslov $P$-index $(i^{P}(B), \nu^{P}(B))$. For any constant $0<d\leq\frac{1}{4}\Vert (A - B)^{\sharp} \Vert^{-1}$, there exists an $m_{0}>0$ such that for $m\geq m_{0}$, there holds
\begin{equation}\label{35}
\begin{split}
m_{d}^{-}(P_{m}(A - B)P_{m})&=m+i^{P}(B),\\
m_{d}^{0}(P_{m}(A - B)P_{m})&=\nu^{P}(B),\\
m_{d}^{+}(P_{m}(A - B)P_{m})&=m + \dim\ker _{\R}(P-I)-i^{P}(B)-\nu^{P}(B),
\end{split}
\end{equation}
where $B$ is the operator defined by $B(t)$.
\end{theorem}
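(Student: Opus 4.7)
The plan is to prove the three identities simultaneously, first reducing the middle (nullity) formula to a kernel identification, and then obtaining the other two via a spectral flow/homotopy argument that compares with an explicitly computable reference case. Throughout I will exploit the dimensional identity $\dim W_P^m = 2m + \dim\ker_\R(P-I)$: any two of the three formulas force the third, so it suffices to establish the nullity formula and one of the two signed formulas.

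First I would verify the Fredholm structure. The operator $B$ induced by $B(t)$ is compact on $W_P$ via the Sobolev embedding $W^{1/2,2}\hookrightarrow L^s$ (inequality (\ref{34})), while $A$ has discrete spectrum accumulating at $\pm\infty$, so $A-B$ is a bounded self-adjoint Fredholm operator of index $0$. A direct integration by parts identifies $\ker(A-B)$ with the space of $z\in W_P$ satisfying $\dot z = JB(t)z$ and the $P$-boundary condition $z(\tau)=Pz(0)$; hence $\dim\ker(A-B)=\dim\ker(\gamma(\tau)-P)=\nu^P(B)$. Now the assumption $d\leq \tfrac14\|(A-B)^\sharp\|^{-1}$ ensures that the only spectrum of $A-B$ inside $(-2d,2d)$ comes from the kernel, so by the standard Galerkin spectral approximation theorem for self-adjoint Fredholm operators one obtains, for $m$ large, $m_d^0(P_m(A-B)P_m)=\dim\ker(A-B)=\nu^P(B)$, which is the middle identity.

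For the signed identities I would argue by a continuation in a parameter $s\in[0,1]$. Fix a reference $B_0$ (for instance $B_0(t)\equiv \epsilon I$ for a small $\epsilon>0$, or a constant coefficient matrix whose fundamental solution is a small perturbation of the identity) for which the Maslov $P$-index $(i^P(B_0),\nu^P(B_0))$ is known and for which $m_d^-(P_m(A-B_0)P_m)$ can be computed explicitly from the eigenvalues of $A$ on $W_P$. Set $B_s=(1-s)B_0+sB$ and track the signature of $P_m(A-B_s)P_m$. By continuity of eigenvalues and Step 1 applied uniformly in $s$, $m_d^-(P_m(A-B_s)P_m)$ changes by $\pm 1$ exactly when an eigenvalue of $P_m(A-B_s)P_m$ crosses $0$. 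Using the intertwining $\langle (A-B_s)z,z\rangle$ with the quadratic form governing the Maslov $P$-index and the Bott-type machinery of Lemma \ref{lemma:1}, each such finite-dimensional crossing corresponds (for $m$ large) to a crossing of the symplectic path associated with $B_s$, which in turn contributes $\pm 1$ to $i^P(B_s)$. Consequently
$$m_d^-(P_m(A-B)P_m)-m_d^-(P_m(A-B_0)P_m)=i^P(B)-i^P(B_0),$$
and plugging in the reference value yields the first identity. The third identity follows from the dimension count.

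The main technical obstacle is the spectral-flow/crossing step: one must show, uniformly in $s\in[0,1]$ and for $m\geq m_0$ independent of $s$, that the finite-dimensional eigenvalue crossings of $P_m(A-B_s)P_m$ are in bijection with the genuine crossings in the Maslov-index sense. This requires (a) uniform control over the gap $(-2d,2d)$ along the homotopy, via a uniform bound $\|(A-B_s)^\sharp\|\leq C$ away from crossing times; (b) ruling out ``parasitic'' eigenvalues that enter $(-d,d)$ from the truncation tail of $W_P$ rather than from the Fredholm kernel, which uses the fact that $\|(A^\sharp)|_{W_P\setminus W_P^m}\|\to 0$; and (c) matching the sign convention of a crossing eigenvalue with the sign convention built into the definition $i^P_\omega(\gamma)=i_\omega(P^{-1}\gamma\ast\xi)-i_\omega(\xi)$, which ultimately reduces to verifying the identity for a one-parameter family of constant coefficient operators and invoking the homotopy invariance of both sides.
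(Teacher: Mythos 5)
The paper does not actually prove this statement: it is imported verbatim from \cite{Liu3} (Lemma 3.2 and Theorem 4.6), so there is no in-paper proof to compare your attempt against; I can only judge your outline on its own terms and against the standard argument in that reference. Your structural reductions are sound: $\dim W_P^m=2m+\dim\ker_{\R}(P-I)$ does make the third identity follow from the other two, and the nullity part is essentially complete --- integration by parts identifies $\ker(A-B)$ with the solutions of $\dot z=JB(t)z$, $z(\tau)=Pz(0)$, so $\dim\ker(A-B)=\dim\ker(\gamma(\tau)-P)=\nu^{P}(B)$; since $\Vert (A-B)^{\sharp}\Vert^{-1}$ is the spectral gap of $A-B$ at $0$, the hypothesis puts $\sigma(A-B)\cap(-4d,4d)\subseteq\{0\}$, and because $P_m$ commutes with $A$ while $B$ is compact (via (\ref{34})), the standard Galerkin spectral approximation gives $m_d^0(P_m(A-B)P_m)=\dim\ker(A-B)$ for large $m$.

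The genuine gap is in the signed formula. The sentence ``each such finite-dimensional crossing corresponds, for $m$ large, to a crossing of the symplectic path, which contributes $\pm1$ to $i^{P}(B_s)$'' is, in its $d$-localized and $m$-uniform form, essentially the statement being proved; asserting it as a step makes the argument circular unless the correspondence is actually established. Two concrete defects: (i) with $B_0=\epsilon I$ the homotopy $B_s=(1-s)B_0+sB$ is not monotone for general $B$, so eigenvalues of $P_m(A-B_s)P_m$ can cross $0$ in both directions; your count then has to be a genuine signed spectral flow, and equating the finite-dimensional flow with the infinite-dimensional one for all $m\geq m_0$ uniformly in $s$ is exactly your unresolved obstacle (b) about parasitic eigenvalues from the truncation tail. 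The way this is closed in the literature is to take $B_0$ with $B_0<B$ (e.g.\ $B_0\ll 0$), so that $s\mapsto B_s$ is strictly increasing, every crossing eigenvalue moves in one direction, the crossing set is finite, and the crossing count on the Maslov side is supplied by Lemma \ref{lemma:5} (i.e.\ $i^{P}(B)-i^{P}(B_0)=\sum_{s\in[0,1)}\nu^{P}(B_s)$), while the count on the Galerkin side is supplied by applying your already-proved nullity formula at each crossing parameter. (ii) The reference computation is left implicit, and it is not trivial: for $B_0=\epsilon I$ the induced operator is the $L^2$ pairing, not $\epsilon$ times the identity on $W_P$, so even the base identity $m_d^-(P_m(A-B_0)P_m)=m+i^{P}(B_0)$ requires a perturbation argument on $\ker A$ plus the computation $i^{P}(\epsilon I)=\dim\ker_{\R}(P-I)$. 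Until the crossing correspondence and the base case are actually carried out, the first and third identities are not established.
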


For the operators $A$ and $B$ defined in (\ref{3}), there is another description of the Maslov $P$-index as follows.

\begin{lemma}[\cite{Liu3}]\label{lemma:5}
For any two operators $B_{1}, B_{2} \in C(\R, \mathcal{L}_{s}(2n))$ with $ B_{i}(t+\tau)=(P^{-1})^{T}B_{i}(t)P^{-1}, i=1,2$ and $B_{1} < B_{2}$, there holds
\begin{equation}
i^{P}(B_{2}) - i^{P}(B_{1}) = \sum_{s\in[ 0, 1 )}^{}\nu^{P}((1-s)B_{1} + sB_{2}).
\end{equation}
\end{lemma}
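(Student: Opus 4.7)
The plan is to interpret $i^P(B_2)-i^P(B_1)$ as the spectral flow along the affine homotopy $B_s:=(1-s)B_1+sB_2$, and to reduce the count to a finite-dimensional monotone eigenvalue problem via the Galerkin approximation of Theorem \ref{theorem:2}. Since $B_1<B_2$, the map $s\mapsto B_s$ is strictly increasing, so for each $m$ the finite-dimensional path $T_s:=P_m(A-B_s)P_m$ on $W_P^m$ is a continuous, \emph{strictly} decreasing path of self-adjoint operators; consequently its eigenvalues are continuous, strictly decreasing functions of $s$.

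First I would choose $d>0$ small enough that Theorem \ref{theorem:2} is applicable at both endpoints, i.e.\ $d\le\tfrac14\min(\|(A-B_1)^{\sharp}\|^{-1},\|(A-B_2)^{\sharp}\|^{-1})$, and $m\ge m_0$ sufficiently large. Then Theorem \ref{theorem:2} gives
\begin{equation*}
i^P(B_j)=m_d^-(T_j)-m,\quad j=1,2,
\end{equation*}
and the lemma reduces to the finite-dimensional identity $m_d^-(T_1)-m_d^-(T_0)=\sum_{s\in[0,1)}\nu^P(B_s)$.

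Next I would establish that the crossing set $\{s\in[0,1]:\nu^P(B_s)>0\}$ is finite. The fundamental solution $\gamma_s$ of $\dot y=JB_s(t)y$ depends real-analytically on $s$, so $s\mapsto\det(\gamma_s(\tau)-P)$ is real-analytic on $[0,1]$; it is not identically zero (the strict monotonicity of the finite-dimensional eigenvalues of $T_s$ forces each of them to attain zero at most once), hence it has only finitely many zeros $0\le s_1<\cdots<s_N\le 1$. At each $s_i$, Theorem \ref{theorem:2} (applied with $d$ possibly further shrunk to stay below $\tfrac14\|(A-B_{s_i})^{\sharp}\|^{-1}$) gives $m_d^0(T_{s_i})=\nu^P(B_{s_i})$, identifying $\nu^P(B_{s_i})$ as the number of eigenvalues of $T_{s_i}$ at zero.

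Finally, I would compute the net change in $m_d^-(T_s)$. For $d$ smaller than the gap between the non-crossing eigenvalues and $0$ throughout $[0,1]$, an eigenvalue $\lambda_j(s)$ contributes $+1$ to $m_d^-(T_1)-m_d^-(T_0)$ exactly when $\lambda_j(0)>-d$ and $\lambda_j(1)<-d$, which --- by strict monotonicity and continuity --- is equivalent to $\lambda_j$ passing through $0$ at some $s^*\in[0,1)$. (At $s^*=0$ the eigenvalue starts inside $(-d,d)\subset(-d,\infty)$ and ends below $-d$, contributing $+1$; at $s^*=1$ it ends inside $(-d,d)$ and is therefore not yet in $m_d^-$, contributing $0$; this is the arithmetic source of the half-open range.) Summing $\nu^P(B_{s_i})$ over the $s_i\in[0,1)$ yields the identity. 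The main obstacle I expect is the uniform-in-$s$ bookkeeping near the endpoints and the choice of $d$: $d$ must simultaneously satisfy Theorem \ref{theorem:2} at both $B_1$ and $B_2$ \emph{and} lie below the minimum non-crossing-eigenvalue gap along the whole path, so that no eigenvalue spuriously enters or exits $(-d,d)$ away from the crossing points.
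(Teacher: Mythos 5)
The paper does not prove this lemma; it quotes it from \cite{Liu3}, so there is no in-paper argument to compare against. Your overall strategy --- realize $i^{P}(B_{2})-i^{P}(B_{1})$ as a spectral flow along $B_{s}=(1-s)B_{1}+sB_{2}$ and count monotone eigenvalue crossings through the Galerkin approximation of Theorem \ref{theorem:2} --- is the right one. But the execution has a genuine gap: you try to run the whole argument with a single $d$ and a single $m$ valid on all of $[0,1]$, and this is not merely delicate but impossible. The admissible threshold $\frac{1}{4}\Vert (A-B_{s})^{\sharp}\Vert^{-1}$ in Theorem \ref{theorem:2} tends to $0$ as $s$ approaches a crossing time, because $A-B_{s}$ then has a nonzero eigenvalue converging to $0$; hence no positive $d$ works uniformly, and for $s$ near a crossing one gets $m_{d}^{0}(T_{s})\geq 1$ while $\nu^{P}(B_{s})=0$. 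This breaks your final count in both directions: a single decreasing branch $\lambda_{j}$ can lie in $(-d,d)$ at two distinct crossing times (double counting on the right-hand side), and a branch that vanishes at an interior crossing need not have reached the level $-d$ by $s=1$ (so it is missed by $m_{d}^{-}(T_{1})-m_{d}^{-}(T_{0})$). Your finiteness argument is also circular: you invoke the finite-dimensional identification at every $s$ to show $\det(\gamma_{s}(\tau)-P)$ is not identically zero, but that identification is exactly what fails for fixed $m,d$ away from finitely many good points.

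The standard repair is to localize and telescope. Finiteness of the crossing set follows not from the Galerkin picture but from the crossing form: at a crossing $s^{*}$ the form $y\mapsto -\langle (B_{2}-B_{1})y,y\rangle$ restricted to $\ker(\gamma_{s^{*}}(\tau)-P)$ is negative definite (this is where $B_{1}<B_{2}$ enters), so crossings are regular, hence isolated, hence finite. Then around each crossing $s^{*}$ one applies Theorem \ref{theorem:2} with $d$ and $m$ adapted to $s^{*}$ alone, on an interval $[s^{*}-\epsilon,s^{*}+\epsilon]$ containing no other crossing, to obtain the local jump $i^{P}(B_{s^{*}+\epsilon})-i^{P}(B_{s^{*}-\epsilon})=\nu^{P}(B_{s^{*}})$ (monotonicity forces all $\nu^{P}(B_{s^{*}})$ small eigenvalues to exit downward), while $i^{P}(B_{s})$ is locally constant between crossings. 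Summing these local contributions over the crossings in $[0,1)$ yields the identity, with the half-open convention arising exactly from the endpoint asymmetry you already identified. With this subdivision your argument goes through; without it, it does not.
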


\begin{remark}\label{remark:1}
Suppose that $B > 0$, we have
\begin{equation}
i^{P}( B ) = \sum_{s\in[ 0, 1 )}^{}\nu^{P}( sB ).
\end{equation}
\end{remark}

\section{The proof of Theorem \ref{theorem:1}}
\setcounter{equation}{0}

In \cite{Liu3}, the following result was proved.

\begin{theorem}(\cite{Liu3})\label{theorem:3}
Suppose $P \in Sp(2n)_{k}$, and the Hamiltonian function $H$ satisfies (H1)-(H4),
then for every $\tau > 0$, the system (\ref{1.1}) possesses a nonconstant {\it $P$-solution} $(\tau, x)$ satisfying
\begin{equation}
\rm{\text{dim}\ \text{ker}_{\R}}(P-I)+2- \nu^{P}(\it x ) \leq i^{P}(\it x ) \leq \rm{\text{dim}\ \text{ker}_{\R}}(P-I)+1.
\end{equation}

\end{theorem}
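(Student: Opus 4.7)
The plan is to obtain a non-constant $P$-solution of (\ref{1.1}) via a linking-type minimax on the Hilbert space $W_P$, and then read off the Maslov $P$-index bounds from Theorem \ref{theorem:2}.

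First, I would work with the action functional $\Phi(x)=\tfrac12\langle Ax,x\rangle-\int_0^\tau H(x(t))\,dt$ on $W_P$, whose critical points correspond to $P$-solutions of (\ref{1.1}). To secure the Palais--Smale condition I would modify $H$ outside a large ball into a quadratic tail $H_K$; the modified functional $\Phi_K$ keeps all the key geometric features. Splitting $W_P=X^+\oplus X^0\oplus X^-$ by the spectral decomposition of $A$, with $X^0=\ker_\R(P-I)$ of dimension $d$, (H3) forces $\Phi_K\geq\alpha>0$ on a small sphere $S_\rho^+\subset X^+$, while (H4) forces $\Phi_K\leq 0$ on the boundary of the ``box'' $Q=(X^-\oplus X^0\oplus\R e)\cap\{\|\cdot\|\le R\}$, where $e\in X^+$ is a fixed unit vector and $R$ is large. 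The Linking Theorem yields a critical point $x$ of $\Phi_K$ with $\Phi_K(x)\geq\alpha$. A standard bootstrap using (H4) gives an a priori $L^\infty$ bound showing that for $K$ large, $x$ actually solves the untruncated equation; and $\Phi_K(c)\leq 0$ for $c\in X^0$ combined with $\Phi_K(x)>0$ ensures $x$ is non-constant.

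Next, I would extract Morse-index information from the linking. In the Galerkin filtration $P_m\colon W_P\to W_P^m$, the linking subspace $W_m^-\oplus X^0\oplus\R e$ has dimension $m+d+1$, and the critical-group estimate for linking critical points yields
\begin{equation*}
m_d^-(P_m(A-B)P_m)\leq m+d+1 \quad\text{and}\quad m_d^-(P_m(A-B)P_m)+m_d^0(P_m(A-B)P_m)\geq m+d+2,
\end{equation*}
where $B(t)=H''(x(t))$. The extra ``$+1$'' on the right is contributed by the time-translation kernel direction $\dot x\in\ker d^2\Phi(x)$, which is linearly independent of $X^0$: if $\dot x\in X^0$ then $x$ would be affine with $H'(x(\cdot))$ constant, contradicting the superquadratic growth (H4). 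Invoking Theorem \ref{theorem:2}, the identities $m_d^-=m+i^P(x)$ and $m_d^0=\nu^P(x)$ turn the two inequalities into $i^P(x)\leq d+1$ and $i^P(x)+\nu^P(x)\geq d+2$, which is exactly the claimed inequality.

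The main technical obstacle is the uniform-in-$m$ Morse-index estimate, in particular the extra $+1$ in the lower bound. The upper bound follows from the standard deformation lemma applied to the linking structure. The lower bound $m_d^-+m_d^0\geq m+d+1$ follows from the non-triviality of the $(m+d+1)$-st critical group at $x$ implicit in the linking. Upgrading to $\geq m+d+2$ requires the $S^1$-equivariant refinement coming from the autonomous symmetry of $\Phi$, together with the independence of $\dot x$ from $X^0$ argued above. Once these finite-dimensional estimates are in place, the passage via Theorem \ref{theorem:2} is automatic.
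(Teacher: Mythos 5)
The paper itself gives no proof of Theorem \ref{theorem:3}: it is quoted from \cite{Liu3}, so there is no in-text argument to compare yours against line by line. That said, your architecture --- truncating $H$, running a linking minimax for the action functional on $W_P$ over $Q=(W_m^-\oplus W_P^0\oplus\R e)\cap\{\|\cdot\|\le R\}$, extracting Morse index bounds for the min-max critical point in the Galerkin approximations, and converting them via Theorem \ref{theorem:2} --- is exactly the standard route for results of this type (Dong--Long, Fei--Qiu, and \cite{Liu3} itself, which is where Theorem \ref{theorem:2} comes from), and the upper bound $i^P(x)\le\dim\ker_\R(P-I)+1$ does follow from $m_d^-(P_m(A-B)P_m)\le\dim Q=m+d+1$ as you describe.

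The genuine gap is the lower bound, which you flag as the main obstacle but do not actually close, and the heuristic you offer for it is not the right mechanism. The Ghoussoub/Lazer--Solimini/Solimini min-max Morse index estimates give $m_d^-\le\dim Q\le m_d^-+m_d^0$, hence only $i^P(x)+\nu^P(x)\ge d+1$, one short of the claim --- and this missing $+1$ is precisely what the main theorem needs, since (3.18) uses $\nu^P=1$ together with $i^P+\nu^P\ge d+2$ to force $i^P(x|_{[0,\tau]})=d+1$. Your proposed source of the extra unit, namely that $\dot x\in\ker d^2\Phi(x)$ and is linearly independent of $X^0$, only yields $m_d^0\ge 1$ (i.e.\ $\nu^P(x)\ge1$), and $m_d^-+m_d^0\ge m+d+1$ combined with $m_d^0\ge1$ does not imply $m_d^-+m_d^0\ge m+d+2$; the independence from $X^0$ is beside the point because $X^0=\ker A$ is not contained in $\ker(A-B)$, so there is no double counting to rule out. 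To genuinely obtain $d+2$ one must use the $S^1$-invariance quantitatively --- for instance by enlarging the min-max family by the time-translation orbit so that the linking set has dimension $m+d+2$ while invariance keeps the min-max value unchanged, or by a critical-group argument for the whole critical circle $S^1\cdot x$ --- and this is the step that would have to be written out. A secondary loose end: the identities of Theorem \ref{theorem:2} hold for $m\ge m_0(B)$ with $B=H''(x)$ built from the limiting solution, whereas your index bounds are for critical points of the restricted functionals on $W_P^m$; you still need convergence of the Galerkin critical points and persistence of the bounds in the limit.
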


Before the proof of Theorem \ref{theorem:1}, we need to get the information about the iteration Maslov index for paths connecting $I$ and $P^{-1}$. Firstly, from Lemma \ref{ll} we recall that for any $P\in Sp(2n)_{k}$, there exists $(p,j_{1},j_{2},\cdots,j_{r})\in \N^{r+1}$ such that $p+\sum_{m=1}^{r} j_{m}=n$ and $I_{2p} \diamond R(\frac{2\pi}{k})^{\diamond j_{1}} \diamond \cdots
\diamond R(\frac{2r\pi}{k})^{\diamond j_{r}} \in \Omega^{0}(P^{-1})$. We remind that here $\N=\{0,1,\cdots,\}$.
\begin{lemma}\label{lemma:6}
For $P\in Sp(2n)_{k}(r,p;j_{1},j_{2},\cdots,j_{r})$ and $\xi \in \mathcal{P}_{\tau}(2n)$ with $\xi(\tau)=P^{-1}$, there holds
\begin{equation}
(k+1)i(\xi)-i(\xi,k+1)=\sum_{m=1}^{r}(k-2m) j_{m} -kp.
\end{equation}
\end{lemma}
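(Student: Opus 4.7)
The plan is to reduce to the basic normal form of $P^{-1}$ given by Lemma \ref{ll} and compute the Maslov-type indices block-by-block using additivity under direct sum.

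First I would argue that the left-hand side is independent of the choice of $\xi$ with $\xi(\tau) = P^{-1}$. Any two such paths are related by a change of homotopy class governed by $\pi_{1}(Sp(2n)) \cong \Z$, and moving through the generator shifts every $i_{\omega}(\xi)$ by the same integer $c$ (as one verifies on an explicit generator such as $\rho(t) = R(\tfrac{2\pi t}{\tau}) \diamond I_{2n-2}$). By Bott's formula (Lemma \ref{lemma:1}), $i(\xi,k+1) = \sum_{\omega^{k+1}=1} i_{\omega}(\xi)$ then shifts by $(k+1)c$, matching the shift in $(k+1)i(\xi)$; so the difference is invariant and it suffices to verify the identity for one convenient $\xi$.

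I would take $\xi(t) = I_{2p} \diamond R\bigl(\tfrac{2\pi t}{k\tau}\bigr)^{\diamond j_{1}} \diamond \cdots \diamond R\bigl(\tfrac{2r\pi t}{k\tau}\bigr)^{\diamond j_{r}}$, whose endpoint lies in $\Omega^{0}(P^{-1})$ by Lemma \ref{ll}. The iteration formula (\ref{4}) with $P = I$, together with the commutativity of 2D rotations, shows that $\xi^{k+1}$ is given by the same block-diagonal formula extended to $t \in [0, (k+1)\tau]$. Using additivity of $i$ under direct sum together with $i = -p$ for the constant path at $I_{2p}$ and the standard formula $i(R(\tfrac{\beta t}{\tau})|_{[0,\tau]}) = 2\lfloor\beta/(2\pi)\rfloor + 1$ for $\beta \in (0,\infty) \setminus 2\pi\Z$, I would compute: the $I_{2p}$ block contributes $(k+1)(-p) - (-p) = -kp$; each rotation block has $\beta = 2m\pi/k \in (0, 2\pi)$ on $[0,\tau]$ (since $m \le r < k/2$) and $\beta = 2m\pi(k+1)/k \in (2m\pi, 2(m+1)\pi)$ on $[0,(k+1)\tau]$, giving $i = 1$ and $i = 2m+1$ respectively, so each copy contributes $(k+1) - (2m+1) = k - 2m$, with $j_{m}$ copies giving $(k - 2m)j_{m}$. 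Summing over $m = 1, \ldots, r$ yields the claimed identity.

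The main obstacle is justifying the explicit rotation-index formula $i(R(\tfrac{\beta t}{\tau})|_{[0,\tau]}) = 2\lfloor\beta/(2\pi)\rfloor + 1$, which is standard in Long's Maslov-type index theory and is derived by homotoping the rotation path to a canonical reference path and tracking crossings with the Maslov cycle. A secondary technical point is the uniform-shift property for the generator of $\pi_{1}(Sp(2n))$, which is verified by direct computation on $R(\tfrac{2\pi t}{\tau}) \diamond I_{2n-2}$ using the same rotation formula.
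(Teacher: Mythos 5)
Your route is genuinely different from the paper's. The paper invokes the precise iteration formula of Long (Theorem 9.3.1 of \cite{Long1}) for $i(\xi,k+1)$ in terms of the splitting numbers $S^{\pm}_{P^{-1}}(\omega)$, and then computes those splitting numbers from the normal form of Lemma \ref{ll} ($S^{+}_{P^{-1}}(1)=p$, $S^{-}_{P^{-1}}(e^{2m\pi\sqrt{-1}/k})=j_{m}$, $C(P^{-1})=\sum j_{m}$). You instead argue that the combination $(k+1)i(\xi)-i(\xi,k+1)$ is an invariant and evaluate it on an explicit model path built from rotations, using only symplectic additivity, $i=-p$ for the constant path in $Sp(2p)$, and $i(R(\beta t/\tau)|_{[0,\tau]})=2\lfloor\beta/2\pi\rfloor+1$. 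Your arithmetic is correct and matches the paper's: the $I_{2p}$ block gives $-kp$ and each $R(\frac{2m\pi t}{k\tau})$ block gives $(k+1)\cdot 1-(2m+1)=k-2m$ (here $m\le r<k/2$ guarantees both endpoint angles avoid $2\pi\Z$). Your approach is more elementary in that it bypasses splitting numbers entirely, at the cost of having to justify the invariance statements yourself; the paper's approach packages exactly those invariances into the splitting-number formalism.

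There is, however, one concrete gap. Your invariance argument in step 1 (changing $\xi$ by an element of $\pi_{1}(Sp(2n))$ shifts every $i_{\omega}(\xi)$ by the same integer) only compares paths with the \emph{same} endpoint $P^{-1}$. But your model path ends at $N=I_{2p}\diamond R(\frac{2\pi}{k})^{\diamond j_{1}}\diamond\cdots\diamond R(\frac{2r\pi}{k})^{\diamond j_{r}}$, which Lemma \ref{ll} only places in $\Omega^{0}(P^{-1})$; in general $N\neq P^{-1}$. You need the additional bridge that $(k+1)i(\xi)-i(\xi,k+1)=\sum_{\omega^{k+1}=1}\bigl(i_{1}(\xi)-i_{\omega}(\xi)\bigr)$ is unchanged when the endpoint is moved along a path $\eta$ inside $\Omega^{0}(P^{-1})$ from $N$ to $P^{-1}$: by the definition of $\Omega(\cdot)$ the unit-circle spectrum and all multiplicities $\nu_{\lambda}$ are frozen along $\eta$, so every $\nu_{\omega}$ is constant along the concatenation $\xi\ast\eta$ versus $\xi$, and hence every $i_{\omega}$ is unchanged. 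This is standard (it is the same homotopy-invariance input that makes splitting numbers depend only on $\Omega^{0}(P^{-1})$, which is why the paper's version needs no such step), but as written your proof only establishes the identity for paths ending at $N$, not for the paths ending at $P^{-1}$ that the lemma and the proof of Theorem \ref{theorem:1} actually require. Adding this one observation closes the argument.
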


\begin{proof}
By Theorem 9.3.1 in \cite{Long1}(also \cite{LZ2}), we have
\begin{equation}
\begin{aligned}
i(\xi,k+1)&=(k+1)(i(\xi)+S^{+}_{P^{-1}}(1)-C(P^{-1}))\\&\quad+2\sum_{\theta\in (0,2\pi)}E(\frac{(k+1)\theta}{2\pi})S^{-}_{P^{-1}}(e^{\sqrt{-1}\theta})-(S^{+}_{P^{-1}}(1)+C(P^{-1}))\\
&=(k+1)i(\xi)+kS^{+}_{P^{-1}}(1)-(k+2)C(P^{-1})+2\sum_{\theta\in (0,2\pi)}E(\frac{(k+1)\theta}{2\pi})S^{-}_{P^{-1}}(e^{\sqrt{-1}\theta}),
\end{aligned}
\end{equation}
where $S^{\pm}_{M}(\omega)$ denote the splitting number of $M\in Sp(2n)$ at $\omega\in \mathbf{U}$, $C(M)=\sum_{\theta\in (0,2\pi)}S^{-}_{M}(e^{\sqrt{-1}\theta})$ and $E(a)=\min\{m\in \Z\mid m\geq a\}$. One can see these notions in Chapter 9 of \cite{Long1}.

For $P\in Sp(2n)_{k}(r,p;j_{1},j_{2},\cdots,j_{r})$, $I_{2p} \diamond R(\frac{2\pi}{k})^{\diamond j_{1}} \diamond \cdots
\diamond R(\frac{2r\pi}{k})^{\diamond j_{r}} \in \Omega^{0}(P^{-1})$ with $2r<k$. By direct computation, for $\theta\in (0,\pi)\cup (\pi,2\pi)$, we have
\begin{equation}
\begin{split}
S^{+}_{P^{-1}}(1)&=p\ ;\\
C(P^{-1})&=\sum_{m=1}^{r}j_{m}\ ;
\end{split}
\end{equation}

\begin{equation}
S^{-}_{P^{-1}}(e^{\sqrt{-1}\theta})=
\begin{cases}
j_{m}, & if \ \theta=\frac{2m\pi}{k}, 1\le m\le r,\; e^{\frac{2m\pi\sqrt{-1}}{k}}\in \sigma(P^{-1})\ ;\\
\ 0, & otherwise.
\end{cases}
\end{equation}

So
\begin{equation}
\begin{aligned}
\sum_{\theta\in (0,2\pi)}E(\frac{(k+1)\theta}{2\pi})S^{-}_{P^{-1}}(e^{\sqrt{-1}\theta})&=E(\frac{k+1}{k})j_{1}+E(\frac{2(k+1)}{k})j_{2}+\cdots+E(\frac{r(k+1)}{k})j_{r}\\
&=2j_{1}+3j_{2}+\cdots+(r+1)j_{r}.
\end{aligned}
\end{equation}
Then
\begin{equation}
\begin{aligned}
i(\xi,k+1)&=(k+1)i(\xi)+kp-(k+2)(j_{1}+j_{2}+\cdots+j_{r})+2(2j_{1}+3j_{2}+\cdots+(r+1)j_{r})\\
&=(k+1)i(\xi)+kp-(k-2)j_{1}-(k-4)j_{2}-\cdots-(k-2r)j_{r}\ ,
\end{aligned}
\end{equation}
thus $(k+1)i(\xi)-i(\xi,k+1)=\sum_{m=1}^{r}(k-2m) j_{m} -kp$.
\end{proof}

\bigskip
Now we are ready to prove Theorem \ref{theorem:1}.

\noindent {\bf Proof of Theorem \ref{theorem:1}.}

Suppose that $(k\tau, x^{k})$ is the $k\tau$-periodic solution extended by $P$-solution $(\tau, x)$ in Theorem \ref{theorem:3}.
If $k\tau$ is not the minimal {\it $P$-symmetric} period of $(k\tau, x^{k})$, i.e., $\tau>\min\{\lambda>0\mid x(t+\lambda)=Px(t), \forall t\in \R\}$, then there exists some $l\in \N$ such that
\begin{equation*}
T\equiv\frac{\tau}{l}= \min\{\lambda>0\mid x(t+\lambda)=Px(t), \forall t\in \R\}.
\end{equation*}
Thus $x(\tau-T)=x(0)$, both $(l-1)T$ and $kT$ are the period of $x$. Since $kT$ is the minimal {\it $P$-symmetric} period, we obtain $kT\leq (l-1)T$ and then $k\leq l-1$.

Note that $x\vert_{[0, kT]}$ is the $k$-th iteration of $x\vert_{[0, T]}$.
Suppose $\gamma\in \mathcal{P}_{T}(2n)$ is the fundamental solution of the following linear Hamiltonian system
\begin{equation}\label{lh}
\dot{z}(t) = JB(t)z(t)
\end{equation}
with $B(t)=H^{\prime\prime}(x\vert_{[0, T]}(t))$.
Suppose $\xi$ be any symplectic path in $\mathcal P_{T}(2n)$ such that $\xi(T) = P^{-1}$, since $P^{k}=I$, then
\begin{equation}\label{61}
\nu(\xi, 1)=\nu(\xi, k+1)=\nu(\xi, l).
\end{equation}
All eigenvalues of $P$ and $P^{-1}$ are on the unit circle, then the elliptic height
\begin{equation}\label{68}
e(P^{-1})=e(P)=2n.
\end{equation}
Since the system (\ref{1.1}) is autonomous, we have
\begin{equation}\label{62}
\nu_{1}(x\vert_{[0, kT]})\geq 1 \ \ \text{and}\ \ \nu^{P^{l-1}}(\gamma, l-1)=\nu_{1}(x\vert_{[0, (l-1)T]})\geq 1.
\end{equation}
By Lemma \ref{lemma:4}, $P^{l-1}=I$ and (\ref{61})-(\ref{62}), we have
\begin{equation}\label{63}
\begin{split}
i^{I}(\gamma, l-1)&=i^{P^{l-1}}(\gamma, l-1)\\
&\leq i^{P^{l}}(\gamma, l)-i^{P}(\gamma, 1)+\nu(\xi, 1)-\nu(\xi, l)+\frac{e(P^{-1}\gamma(T))}{2} + \frac{e(P^{-1})}{2}-\nu^{P^{l-1}}(\gamma, l-1)\\
&\leq i^{P^{l}}(\gamma, l)-i^{P}(\gamma, 1)+\frac{e(P^{-1}\gamma(T))}{2}+n-1\\
&\leq i^{P^{l}}(\gamma, l)-i^{P}(\gamma, 1)+2n-1.
\end{split}
\end{equation}
Note that $i^{P^{l}}(\gamma, l)=i^{P}(x\mid_{[0, \tau]})\leq \text{dim\ ker}_{\R}(P-I_{2n}) + 1$, here we write $i^{P}(x\mid_{[0, \tau]})$ for $i^{P} (x)$ to remind the solution $x$ is defined in the interval $[0, \tau]$. By the definition of Maslov $P$-index,
\begin{equation*}
i^{I}(\gamma, l-1)=i_{1}(\gamma, l-1)+n.
\end{equation*}
So we get
\begin{equation}\label{64}
i_{1}(\gamma, l-1) \leq \dim\ker_{\R}(P-I) - i^{P}(\gamma, 1) + n.
\end{equation}

By the condition (H5) and Remark \ref{remark:1}, we have
\begin{equation}
i^{P}(\gamma,1)=i^{P}(B)= \sum_{s\in [0,1)}^{}\nu^{P}(sB)=\sum_{s\in [0,1)}^{} \dim\ker_{\R}(\gamma_{B}(sT)-P).
\end{equation}
Here we remind that $B(t)$ and $\gamma_{B}$ are defined in (\ref{lh}). Since $\gamma_{B}(0)=I$, so $\dim\ker_{\R}(\gamma_{B}(sT)-P)=\dim\ker_{\R}(P-I)$ when $s=0$. Thus we have
\begin{equation}
i^{P}(\gamma,1)\geq \dim\ker_{\R}(P-I).
\end{equation}
From (3.12), it implies
\begin{equation}\label{65}
i_{1}(\gamma, l-1)\leq n.
\end{equation}
 By the convex condition (H5), we also have
\begin{equation}\label{66}
i_{1}(x\vert_{[0, kT]})\geq n \ \ \text{and}\ \ i_{1}(x\vert_{[0, (l-1)T]})\geq n.
\end{equation}
We set $m=\frac{l-1}{k}$. Note that $x\vert_{[0, (l-1)T]}$ is the $m$-th iteration of $x\vert_{[0, kT]}$. By (\ref{62}), (\ref{65}), (\ref{66}) and Lemma 4.1 in \cite{LL2}, we obtain $m=1$ and then $k=l-1$. From the above process (3.12)-(3.13) and (3.15)-(3.17), we obtain $k=l-1$ provided $e(P^{-1}\gamma(T))=2n$, and
\begin{equation}\label{67}
\begin{split}
i^{P}(\gamma,1)&= \dim\ker_{\R}(P-I);\\
i^{P}(\gamma,l)&=i^{P}(\gamma,k+1)= \dim\ker_{\R}(P-I)+1,\\
\nu^{P^{k}}(\gamma, k)&=\nu^{P}(\gamma, 1)=1.
\end{split}
\end{equation}
Here we remind that the left inequality in (\ref{14}) of Lemma \ref{lemma:3} holds independent of the choice of $\xi\in\mathcal{P}_{\tau}(2n)$, then for any $\xi\in\mathcal{P}_{\tau}(2n)$ we have
\begin{equation}\label{68}
i^{P}(\gamma,k+1)\geq (k+1)(i^{P}(\gamma, 1) + \nu^{P}(\gamma, 1) - n) + n - 1 + (k+1)i_{1}(\xi)-i(\xi, k+1).
\end{equation}
By the condition $P\in Sp(2n)_{k}(r,p;j_{1},j_{2},\cdots,j_{r})$, we get
\begin{equation}
\text{dim\ ker}_{\R}(P-I) = 2p,
\end{equation}
\begin{equation}
k-2\sum_{m=1}^{r}m\cdot j_{m} > 1.
\end{equation}
Applying (3.18), (3.20) and Lemma \ref{lemma:6} to (3.19), we get
\begin{equation}
k-2\sum_{m=1}^{r}m\cdot j_{m} \leq 1.
\end{equation}
It is contradict to the inequality (3.21). So the minimal {\it $P$-symmetric} period of $(k\tau, x^{k})$ is $k\tau$.

\begin{remark}
Note that $e(P^{-1}\gamma(T))=e((P^{-1}\gamma(T))^{l})=e(P^{-1}\gamma(\tau))=2n$ is required in the above proof.
If $e(P^{-1}\gamma(T))\leq 2n-2$, we get $i_{1}(\gamma, l-1)<n$ by taking the same process as (\ref{63})-(\ref{64}). It contradicts to the second inequality of (\ref{66}). At this moment, the minimal {\it $P$-symmetric} period of $(k\tau, x^{k})$ is $k\tau$.

The condition (H5) can be replaced by a weaker condition: $H''(x(t))\ge 0$ and $\int_0^{\tau}H''(x(t))dt>0$ for the $P$-solutuin $(\tau,x)$ in Theorem 3.1.
\end{remark}

\end{document}